\documentclass[11pt]{article}
\usepackage{styles}
\usepackage{fullpage}
\newcommand{\Unif}{\mathrm{Unif}}
\newcommand{\Supp}{\mathrm{Supp}}
\newcommand{\Res}{\mathrm{Res}}

\title{Uncertainty Principles for the Number Theoretic Transform}
\author{Giulio Malavolta\thanks{Bocconi University, BIDSA. \texttt{giulio.malavolta@unibocconi.it}.} \and Alon Rosen \thanks{Bocconi University, BIDSA. \texttt{alon.rosen@unibocconi.it}.}}
\date{}

\begin{document}

\maketitle

\begin{abstract}
    Motivated by polynomial identity testing with exponentials (Li and Wu, ITCS'26), we study uncertainty principles for the number-theoretic transform (NTT). We show that the NTT satisfies strong sparsity tradeoffs: For every fixed prime $q$ and for all but finitely many primes $p \equiv 1 \pmod q$ every nonzero $f\in \mathbb F_p^{\Z_q}$ and its number-theoretic transform $\hat f$ satisfy
\[
|\mathrm{Supp}(f)| + |\mathrm{Supp}(\hat f)| \ge q+1.
\]
Thus, a $k$-sparse function has transform support at least $q-k+1$. As our main technical contribution, we prove a \emph{probabilistic} version of the above uncertainty principle, averaged over primes $p$, in the regime $p=q^{O(1)}$.

As an application, we obtain a black-box identity test for $k$-sparse exponential polynomials of degree at most $d$ with vanishing soundness error, for $q$ moderately larger than $k$.
\end{abstract}

%\tableofcontents

\section{Introduction}
Polynomial identity testing (PIT) asks whether a given algebraic expression is identically zero.
For ordinary low-degree polynomials over a field, this problem is well understood: One can
evaluate the polynomial at a random point, and the Schwartz--Zippel lemma \cite{schwartz1980fast,zippel1979probabilistic,demillo1977probabilistic} shows that every
nonzero degree-\(d\) polynomial vanishes with probability at most \(d/|S|\) over a finite test set \(S\).

In a recent work \cite{li2026identity} Li and Wu introduced the analogous problem for expressions that contain exponentials.  A
typical object has the form
\[
P(x)=\sum_{j=1}^k f_j(x)\exp\left(\frac{g_j(x)}{h_j(x)}\right),
\]
where the \(f_j,g_j,h_j \in \mathbb{Z}[x]\) are polynomials. The identity-testing problem is to decide whether \(P\) is identically zero on its domain.  This differs from standard PIT in two basic ways: First, the expression is no longer a polynomial, so the Schwartz--Zippel lemma does not apply directly. Second, there is no canonical notion of evaluating \(\exp(\cdot)\) over a finite field, so even defining a useful black-box testing model requires care.

This problem is motivated by recent advances in machine-learning.  In optimization compilers for tensor programs, such as Mirage \cite{wu2025mirage} one often needs to test whether two subprograms are functionally equivalent. For purely algebraic fragments, randomized finite-field testing is already useful. The next
natural step is to handle fragments that involve exponentials, since operations such as
normalization and softmax introduce this type of structure. Li and Wu \cite{li2026identity} formalized a finite-field
evaluation model for such expressions by using two primes $p$ and $q$ with $p \equiv 1 \pmod{q}$: Arithmetic inside the
exponent is carried out modulo \(q\), arithmetic outside the exponent is carried out modulo \(p\),
and exponentials are interpreted using the unique subgroup of order \(q\) in \(\mathbb F_p^*\).  This gives
a mathematically clean black-box model and matches the algebraic structure used by the verifier in Mirage \cite{wu2025mirage}.

The central challenge is in proving soundness. The main result of \cite{li2026identity} shows that, for a sum of \(k\)
 exponential terms of degree \(d\), the acceptance probability on a nonzero polynomial  is at most $q^{-1/(k-1)} + O(dk^2/q)$. In the setting of interest where $q =k^{O(1)}$, the soundness error is therefore $1-\Theta({\log q}/{k})$. It is left as an open problem to prove a constant soundness error for the same test, which is conjectured to hold in light of empirical evidence \cite{wu2025mirage}.

 Li and Wu \cite{li2026identity} suggested that this conjecture is related to \emph{uncertainty principles} over finite fields. More specifically, let  \(G\leq \mathbb F_p^*\) be the unique subgroup of order \(q\), they define a linear map from polynomials modulo \(x^q-1\) to their evaluations on \(G\). In cryptography, this is known as (the inverse of) the \emph{number-theoretic transform} (NTT) which, among other things, is routinely used for fast implementations of lattice-related cryptographic schemes \cite{HoffsteinPipherSilverman1998NTRU,lyubashevsky2008swifft, lyubashevsky2013toolkit}. The conjecture can be interpreted as a \emph{strong} uncertainty principle of the NTT and it can be equivalently stated as follows:
 For sufficiently large $q$ and $p$, any univariate non-zero polynomial
\[
f(z)=\sum_{j=1}^k \beta_j z^{\alpha_j},
\]
with $k \leq q^{O(1)}$ distinct non-zero coefficients, has at most $O(q)$ roots in $G$. A formal version of this conjecture is stated in \Cref{conj:descartes}.

\subsection{Our Results}

Motivated by the above algorithmic applications, we prove a series of uncertainty principles for the NTT. Let $f \in \mathbb F_p^{\mathbb Z_q}$ and let $\hat f$ be its NTT, with $\mathrm{Supp}(f)$ denote the non-zero coordinates of $f$. Our first result is a \emph{strong} uncertainty principle for the NTT for infinitely many $p$ and $q$.

\begin{theorem}[Additive Uncertainty, Informal]
    For every fixed prime $q$, for all but finitely many primes \(p\equiv 1 \pmod q\), every non-zero \(f\in \mathbb F_p^{\Z_q}\) satisfies
    \[
    |\mathrm{Supp}(f)|+|\mathrm{Supp}(\hat f)|\ge q+1.
    \]
\end{theorem}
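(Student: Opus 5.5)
The plan is to reduce the statement to Chebotarev's theorem on the minors of the complex Fourier matrix and then transfer it to $\mathbb F_p$ by a reduction-modulo-primes argument.

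\textbf{Step 1 (characteristic zero).} Let $\zeta=e^{2\pi i/q}$ and let $F_q=(\zeta^{ij})_{i,j\in\Z_q}$ be the Fourier matrix. Chebotarev's theorem states that every square minor of $F_q$ is nonzero when $q$ is prime. Each such minor is an element $\Delta_{I,J}\in\Z[\zeta]$, where $I,J\subseteq\Z_q$ and $|I|=|J|$. There are finitely many minors, and each is a nonzero algebraic integer. Hence its norm $N(\Delta_{I,J})=N_{\mathbb Q(\zeta)/\mathbb Q}(\Delta_{I,J})$ is a nonzero rational integer. Let $M=\prod_{I,J}N(\Delta_{I,J})$.

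\textbf{Step 2 (reduction mod $p$).} Take a prime $p\equiv 1\pmod q$. Then $p$ splits completely in $\Z[\zeta]$. Choose a prime $\mathfrak p\mid p$, so that $\Z[\zeta]/\mathfrak p\cong\mathbb F_p$. Under this isomorphism $\zeta$ maps to some primitive $q$-th root of unity $\omega\in\mathbb F_p$. Because the $\mathfrak p$ range over the primes above $p$, every generator $\omega$ of $G$ arises this way; alternatively, one can apply Galois automorphisms. So the NTT matrix $(\omega^{ij})$ is the reduction of $F_q$ modulo $\mathfrak p$. If $p\nmid M$, then no $\Delta_{I,J}$ lies in $\mathfrak p$, since $\mathfrak p\mid\Delta$ would force $p\mid N(\Delta)$. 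Therefore every minor of the NTT matrix is nonzero in $\mathbb F_p$. This excludes only the finitely many primes dividing $M$.

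\textbf{Step 3 (minors $\Rightarrow$ uncertainty).} This is the standard Tao argument. Suppose $f\neq 0$ has $|\Supp(f)|=k$ and $|\Supp(\hat f)|\le q-k$. Choose a set $I$ of $k$ frequencies disjoint from $\Supp(\hat f)$, which is possible because $q-|\Supp(\hat f)|\ge k$. Then the restriction of the NTT matrix to rows $I$ and columns $\Supp(f)$ is a $k\times k$ matrix that annihilates the nonzero vector $f|_{\Supp(f)}$. This contradicts the nonvanishing of its minor. Hence $|\Supp(f)|+|\Supp(\hat f)|\ge q+1$.

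\textbf{Main obstacle.} The substantive input is Chebotarev's theorem. If I had to reprove it, I would follow the standard route, which works modulo the prime $(1-\zeta)$ above $q$. Write the minor as a polynomial in $x_j$'s, namely $\det(x_j^{a_i})$. Factor out the Vandermonde determinant, and show that the value of the quotient at $x=1$ is a nonzero integer not divisible by $q$, using $a_i<q$. Steps 2 and 3 are routine once the norm trick is in place. The one care point is checking that the chosen root $\omega$ in the definition of the NTT is indeed the image of $\zeta$ under some $\mathfrak p$.
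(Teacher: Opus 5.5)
Your proof is correct and follows the paper's route. Both arguments combine three ingredients: Chebotarev's theorem for the complex minors; a transfer to $\mathbb{F}_p$ that excludes the prime divisors of one nonzero integer attached to each minor; and Tao's minors-to-uncertainty argument (the paper's hyperinvertibility lemma). Your norm $N_{\mathbb{Q}(\zeta)/\mathbb{Q}}(\Delta_{I,J})=\prod_{b}D_{R,C}(\zeta^b)$ equals $\pm\Res(D_{R,C},\Phi_q)$ by Poisson's formula, and your reduction modulo a prime ideal above $p$ plays the role of the paper's Sylvester-matrix reduction lemma, so the two differ only in language.
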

A formal version of is given in \Cref{thm:additive}. This implies, as an immediate corollary, a non-uniform version of \Cref{conj:descartes}, in the settings where $p$ is allowed to grow with $q$. In fact, we obtain a much stronger and essentially optimal bound in these settings.

We stress that the well-known Donoho–Stark \cite{donoho1989uncertainty,wigderson2021uncertainty} \emph{multiplicative} uncertainty principle over finite fields (i.e., where the bound is on the product of the cardinalities of the supports) does \emph{not} yield any meaningful bound in our context. To bear any implication on the soundness of the identity tester, one needs \emph{additive} uncertainty principles, such as \Cref{thm:additive}.

A limitation of the above result is that it does not offer a good control on the growth of $p$ with respect to $q$. A quantitative analysis shows that the theorem holds for any $p$ larger than $\approx q^{q^2}$, which is quite far from what is used in practice \cite{wu2025mirage}. In the more realistic settings where $p = q^{O(1)}$, \Cref{thm:additive} does not apply.

Motivated by this, we prove a weaker \emph{approximate} uncertainty relation, which also applies to the settings where $p$ is polynomially bounded in $q$. Such a relation holds for any fixed input vector, but only on average over the random choice of $p$ from the appropriate interval. The following theorem is an informal version of \Cref{thm:one-query-uncertainty}.

\begin{theorem}[Approximate Uncertainty, Informal]
    There exist infinitely many primes \(q\) such that, for any non-zero $f \in \Z^{\Z_q}$ with $|\Supp(f)|<q$, then
\[
   \E_{p\leq q^4}\bigl(|\Supp(\hat{f})|\bigr)
   \ge
   q-1-\frac{32(q-1)\log q}{q^2}\log_2 \norm{f}_1.
\]    
\end{theorem}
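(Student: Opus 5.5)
The plan is to lift the problem to characteristic zero, where there is no cancellation beyond the trivial one, and then to control reduction modulo $p$ through norms in the cyclotomic field $K=\mathbb Q(\zeta_q)$.

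\textbf{Step 1: setup in $\mathbb Z[\zeta_q]$.} Write $F(x)=\sum_{a\in\Z_q} f(a)x^a\in\Z[x]$. For $j\in\Z_q$, the $j$-th NTT coordinate is $\hat f(j)=F(\omega^j)\bmod p$, where $\omega\in\mathbb F_p^*$ has order $q$. Since $p\equiv 1\pmod q$, $p$ splits completely in $\Z[\zeta_q]$. Each prime $\mathfrak P\mid p$ gives a surjection $\Z[\zeta_q]\to\mathbb F_p$ sending $\zeta_q$ to an element of order $q$. Different primes above $p$ correspond exactly to the different choices of $\omega$. We may therefore fix one such $\mathfrak P$ and get
\[
\hat f(j)=0 \iff \mathfrak P \mid \alpha_j,\qquad \alpha_j:=F(\zeta_q^j).
\]

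\textbf{Step 2: the complex transform is nonzero off $j=0$.} For $j\neq 0$, the elements $\alpha_j=\sigma_j(\alpha_1)$ are Galois conjugates. So either all of them vanish or none does. Now $\alpha_1=0$ iff $\Phi_q\mid F$. As $\deg F<q$, this forces $F=c\,\Phi_q$, i.e.\ $f$ is a nonzero constant on $\Z_q$. That contradicts $|\Supp(f)|<q$. Hence $\alpha_j\neq 0$ for all $j\neq 0$, and
\[
|\Supp(\hat f)|\ge q-1-Z_p,\qquad Z_p:=\#\{j\neq 0:\mathfrak P\mid \alpha_j\}.
\]

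\textbf{Step 3: bounding $Z_p$ by a norm.} We have $\mathfrak P\mid\sigma_j(\alpha_1)$ iff $\sigma_j^{-1}(\mathfrak P)\mid\alpha_1$, and the $\sigma_j^{-1}\mathfrak P$ run over the $q-1$ distinct primes above $p$. So $Z_p$ equals the number of primes above $p$ dividing $\alpha_1$. This is at most $v_p(N(\alpha_1))$, since each such prime has norm $p$. Moreover $N(\alpha_1)$ is a nonzero integer with $|N(\alpha_1)|\le\prod_j|\alpha_j|\le\|f\|_1^{q-1}$. Summing over all primes $p$ gives
\[
\sum_p Z_p\,\log_2 p\le \log_2|N(\alpha_1)|\le (q-1)\log_2\|f\|_1 .
\]
Since $p\ge 2$, it follows that $\sum_p Z_p\le (q-1)\log_2\|f\|_1$.

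\textbf{Step 4: averaging.} Let $\mathcal P_q$ be the set of primes $p\le q^4$ with $p\equiv 1 \pmod q$, and let $p$ be uniform in $\mathcal P_q$. Then
\[
\E_p[Z_p]\le \frac{(q-1)\log_2\|f\|_1}{|\mathcal P_q|}.
\]
It remains to show $|\mathcal P_q|\ge q^2/(32\log q)$ for infinitely many primes $q$. This is where the restriction to ``infinitely many $q$'' enters, and I expect it to be the main obstacle, since Linnik-type bounds alone do not give such counts for every $q$.

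My first attempt would use Bombieri--Vinogradov, or the Brun--Titchmarsh/Barban--Davenport--Halberstam-type statement that $\pi(x;q,1)\gg x/(\varphi(q)\log x)$ for almost all $q\le x^{1/2-\epsilon}$. With $x=q^4$ this gives $\gg q^3/\log q$ primes for most $q$, which is far more than needed. The only care required is in extracting infinitely many prime moduli $q$ from an ``almost all moduli'' statement. This works by a counting argument: the primes in a dyadic range $[Q,2Q]$ have density $\gg 1/\log Q$ among all moduli there, which exceeds the density of the exceptional set.

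Combining the bound on $\E_p[Z_p]$ with Step 2 then yields the stated inequality.
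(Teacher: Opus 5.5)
Your argument is correct. Its key step differs from the paper's in a way that actually strengthens the result.

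The paper bounds the number of \emph{bad} primes, those $p$ for which $C$ vanishes at some element of order $q$ in $\mathbb{F}_p$, by the number of distinct prime divisors of $\Res(C,\Phi_q)$. Poisson's formula makes this at most $(q-1)\log_2 W$. It then charges every bad prime the trivial bound of $q$ zeros, so the expected zero count is $1+q(q-1)\log_2 W/|\mathcal{P}(q)|$. This is why the paper needs $|\mathcal{P}(q)|\ge q^3/(32\log q)$.

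You instead use the simply transitive Galois action on the $q-1$ primes above a split $p$. This identifies the zeros of $\hat f$ at $p$ with the prime ideals above $p$ dividing $\alpha_1=F(\zeta_q)$, so $p^{Z_p}$ divides $N(\alpha_1)=\pm\Res(F,\Phi_q)$. You therefore count zeros with multiplicity across all primes rather than counting bad primes, which gives $\sum_p Z_p\le(q-1)\log_2\|f\|_1$. This improves the error term by a factor of $q$:
\begin{itemize}
\item only $|\mathcal{P}(q)|\ge q^2/(32\log q)$ is needed for the stated inequality;
\item with the paper's prime count you get $\frac{32(q-1)\log q}{q^3}\log_2\|f\|_1$;
\item the last term of the PIT soundness bound would be sharpened by the same factor.
\end{itemize}

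The paper's route buys elementarity, since it uses only the Sylvester matrix and Poisson's formula. Its bad-prime lemma is also reused verbatim for finite-field hyperinvertibility and in the appendix, where only whether a prime is bad matters. Your refinement can likewise be made elementary: Hensel-lift the $q-1$ distinct roots of $\Phi_q \bmod p$ to the $p$-adic integers and read off $v_p(\Res(F,\Phi_q))$ from Poisson's formula there.

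The analytic input, Bombieri--Vinogradov plus prime density in dyadic ranges, is the same as the paper's. Two small corrections there:
\begin{itemize}
\item Brun--Titchmarsh gives only upper bounds, so it cannot supply the lower bound you need.
\item Your $x=q^4$ depends on the modulus, so the ``almost all $q\le x^{1/2-\epsilon}$'' statement must be applied either in the $\max_{y\le x}$ form (as the paper does) or via $\pi(q^4;q,1)\ge\pi(N^4;q,1)$ for $q\in[N,2N]$.
\end{itemize}
Given your slack, neither point affects the conclusion.
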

While this bound does not have any direct implication on the tester proposed in \cite{li2026identity}, we show how a comparatively simple modification of their test allows us to prove much stronger soundness bounds. In short, if one is willing to additionally randomize the choice of $p$ (sampled from the appropriate interval), then one obtains a test where every nonzero input is accepted with probability at most $\approx {dk^2}/{q}$. In particular, once \(q\) is moderately larger than \(dk^2\), the test has constant soundness.
\begin{theorem}[PIT with Exponentials, Informal]
   There exists a randomized test such that, for all integer-coefficient \(k\)-sparse exponential polynomials \(P\not\equiv 0\) of degree at most \(d\) and coefficients bounded by $w$, the test accepts with probability at most
\[
   O\!\left(\frac{dk^2}{q}\right)
   +O\!\left(\frac1q\right)
   +O\!\left(\frac{\log(kw)\log q}{q^2}\right).
\]
\end{theorem}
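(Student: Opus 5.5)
The plan is to randomize the test of \cite{li2026identity} over the prime $p$ and reduce soundness to the Approximate Uncertainty theorem (\Cref{thm:one-query-uncertainty}) via a union bound. The test samples a random prime $p\equiv 1 \pmod q$ from the interval $[q, q^4]$, a generator $\omega$ of the order-$q$ subgroup $G\le\mathbb F_p^*$, and a random point $x\in\mathbb F_q$. It then evaluates
\[
P(x)=\sum_{j=1}^k f_j(x)\,\omega^{g_j(x)h_j(x)^{-1}}
\]
modulo $p$ and accepts iff the result is $0$. The error terms have three sources: the evaluation point $x$ is bad, the prime $p$ is bad, or the exponent reduction is bad.

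\textbf{Step 1 (bad $x$, giving the $O(dk^2/q)$ term).} First we bring $P$ to a canonical form. We merge terms whose exponents $g_j/h_j$ coincide as rational functions and drop terms that vanish identically. Now fix $x\in\mathbb F_q$ at which no $h_j$ vanishes. We require, for each pair $j\ne j'$, that $g_j/h_j - g_{j'}/h_{j'}$ be nonzero at $x$ modulo $q$; this is a nonzero polynomial condition of degree $O(d)$ after clearing denominators. By Schwartz--Zippel and a union bound over $\binom{k}{2}$ pairs, this costs $O(dk^2/q)$. We also require each $f_j(x)\not\equiv 0$. Some care is needed here, since $f_j$ has integer coefficients and is reduced mod $p$ rather than mod $q$. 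I would handle this by interpreting $f_j(x)$ with $x$ lifted to $\{0,\dots,q-1\}$, so that $f_j(x)$ is a fixed nonzero integer for all but $O(dk/q)$ choices of $x$. Once $x$ is good, the exponents $\alpha_j = g_j(x)/h_j(x)\in\Z_q$ are distinct, and the coefficients $\beta_j=f_j(x)\in\Z$ are nonzero integers.

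\textbf{Step 2 (reduction to the NTT).} Define $F\in\Z^{\Z_q}$ by $F(\alpha_j)=\beta_j$ and zero elsewhere. Then $P(x)=\hat F(\omega)$, up to the indexing convention of the NTT. Moreover $\omega$ ranges uniformly over the $q-1$ generators of $G$, which are exactly the nonidentity coordinates of $\hat F$. The test thus accepts with probability
\[
1-\frac{\E_p|\Supp(\hat F)\setminus\{0\}|}{q-1}.
\]
We need $|\Supp(F)|\le k<q$, which is implied by $q$ being larger than $k$, and we need $F\neq 0$. We then apply \Cref{thm:one-query-uncertainty} to $F$, discarding at most one coordinate for the trivial character. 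This gives rejection probability at least
\[
1-O\!\left(\frac1q\right)-O\!\left(\frac{\log q}{q^2}\log\norm{F}_1\right).
\]

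\textbf{Step 3 (bounding $\norm{F}_1$).} We have $\norm{F}_1\le k\max_j|f_j(x)|\le k\cdot w\,(d+1)q^d$. So $\log\norm{F}_1=O(\log(kw)+d\log q)$. The term $d\log^2 q/q^2$ is absorbed into $O(dk^2/q)$, which leaves the stated third term.

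\textbf{Main obstacle.} The main obstacle is making the model precise so that Step 2 holds exactly. This includes handling $p$ dividing some $\beta_j$; this case is already covered by the averaging in \Cref{thm:one-query-uncertainty}, which is why that theorem is stated for integer $f$. It also requires that the prime distribution used in that theorem matches the test's sampling. I would define the test's distribution to be literally the one in \Cref{thm:one-query-uncertainty}.
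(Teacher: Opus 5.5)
Your proof is correct, but it handles the coefficient polynomials by a different route from the paper.

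\textbf{The paper's route.} The paper keeps the Li--Wu structure and evaluates the $f_j$ at an independent point $u\in\mathbb F_p^n$. After conditioning on a good exponent point $v$, it splits on whether $H_{a,v}(X)=\sum_j f_j(X)a^{\alpha_j}$ is identically zero. If it is not, Schwartz--Zippel over $\mathbb F_p$ costs $d/(q+1)$. If it is, the paper reads off the coefficients of one fixed monomial $X^\mu$. This gives an integer vector $\eta$ that does not depend on $p$ and has $\|\eta\|_1\le kw$. The approximate uncertainty theorem is then applied with $W=kw$.

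\textbf{Your route.} You use the correlated query $u=\tilde x \bmod p$, which is a legitimate query in the model. Then $(f_j(\tilde x))_j$ is already a fixed integer vector before $p$ is drawn. Showing it is nonzero costs one Schwartz--Zippel over $\mathbb Q$ on $\{0,\dots,q-1\}^n$. This removes the $u$-randomness and the case split, and it reduces the evaluation directly to a coordinate of the NTT of a fixed integer vector.

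\textbf{What each route buys.} You pay in $W$: you get $\log W=O(\log(kw)+d\log q+\log s)$, where $s$ bounds the number of monomials in the $f_j$.
\begin{itemize}
\item In the univariate case $s\le d+1$, and everything is absorbed into $O(dk^2/q)$ as you say.
\item With $n$ variables, $s$ can be as large as $\binom{n+d}{d}$. This leaves an extra term of order $d\log(n+1)\log q/q^2$, which is not dominated by the stated terms uniformly in $n$.
\end{itemize}
The paper's monomial-extraction step is what makes its third term depend only on $kw$, independent of $d$ and $n$.

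\textbf{Points to make explicit.}
\begin{itemize}
\item Schwartz--Zippel for $h_j$ and for $g_jh_{j'}-g_{j'}h_j$ requires these polynomials to stay nonzero modulo $q$. That is why the paper includes their coefficients in the height and assumes $q>2kw$.
\item Invoking \Cref{thm:one-query-uncertainty} requires $q$ to satisfy \Cref{eq:many-primes}.
\item The test should accept on undefined evaluations. Otherwise completeness fails, although soundness is unaffected.
\end{itemize}
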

We refer the reader to \Cref{thm:one-query-pit} for a precise statement. We highlight that, if $q=k^C$ with $C>2$ and $kw$ is polynomially bounded, then the soundness error is $o(1)$, directly improving over \cite{li2026identity}, where the bound is dominated by the term $q^{-1/(k-1)}=1-o(1)$.

Besides testing  equivalence of program fragments that contain exponentials, this test may be useful for verifying arithmetic circuits with exponentiation gates and may provide a soundness primitive for proof systems built for such circuits.

\subsection{Proof Overview}

Recall that the number-theoretic transform (NTT) is the finite-field analogue of the discrete Fourier transform. Fix primes \(q\) and \(p\) with \(p\equiv 1 \pmod{q}\), and let \(g\in \mathbb F_p^*\) have order \(q\). Then evaluation on the subgroup $G=\{1,g,g^2,\dots,g^{q-1}\}$ defines an \(\mathbb F_p\)-linear isomorphism $E:\mathbb F_p[x]/(x^q-1)\mapsto \mathbb F_p^q$
\[
E(f) = \bigl(f(1),f(g),\dots,f(g^{q-1})\bigr)
\]
whose inverse \(\mathcal F=E^{-1}\) is the NTT. Writing \(f(x)=\sum_{j=0}^{q-1}c_jx^j\), the evaluation map is represented by the Vandermonde matrix
\[
M=(g^{tj})_{t,j\in \mathbb Z_q},
\]
so uncertainty statements for the NTT naturally translate to statements about sparse polynomials and their evaluations on \(G\). A good strategy in this context \cite{tao2005uncertainty}, is to show \emph{hyperinvertibility} of $M$, i.e., that every square submatrix of \(M\) must be invertible. A routine calculation shows that this implies the strong (additive) uncertainty principle of the associated Fourier transform.

When the co-domain of the map is $\mathbb{C}$, one can show that $M$ is hyperinvertible \cite{tao2005uncertainty}, so we seek a finite-field analogue of this statement. The main difficulty is that such hyperinvertibility does not automatically translate over finite fields, and the direct analogue is in fact false \cite{zhang2019chebotarev}. Fortunately, a recent result of Caragea et al.\ \cite{caragea2025principal} shows that, if $p$ is sufficiently large compared to $q$, then the hyperinvertibility of $M$ holds. We can directly use this fact to establish our first uncertainty principle. In \Cref{sec:additive}, we also provide a somewhat more direct proof of this fact, using properties of the resultant of polynomials.

Yet the above proof is not useful for algorithmic applications, since a conservative bound on the size of $p$ results into $p \approx q^{q^2}$, which is not realistic. To handle the regime where $p = q^{O(1)}$, we instead prove a weaker \emph{probabilistic} property, averaging over primes $p$ over a bounded interval. The idea is as follows, fix a nonzero

\[
f(z) = \sum_{j\in\mathbb Z_q} c_j z^j\in \mathbb Z[z]
\]
with \(|\Supp(f)|<q\), then $f$ cannot vanish at a primitive \(q\)-th root of unity: Otherwise the cyclotomic polynomial \(\Phi_q(z)=1+z+\cdots+z^{q-1}\) would divide \(f\), forcing \(f\) to be a scalar multiple of \(\Phi_q\), which has full support \(q\). It follows that their resultant $\Res(f,\Phi_q)$ is a nonzero integer. Poisson's formula for the resultant gives
\[
|\Res(f,\Phi_q)|=\prod_{b\in\mathbb Z_q^*}|f(\omega^b)|\le \norm{f}_1^{q-1}.
\]
If \(f\) has an order-\(q\) root modulo some prime \(p\equiv1\pmod q\), then \(f\) and \(\Phi_q\) have a common root modulo \(p\), and hence \(p\mid \Res(f,\Phi_q)\). Thus there are at most \((q-1)\log_2 \norm{f}_1\) such bad primes.
To make this useful in the polynomial-size regime, we sample \(p\) from the interval
\[
\mathcal P(q)=\{p\le q^4:\ p\text{ prime and }p\equiv1\pmod q\}.
\]
We then appeal to the Bombieri--Vinogradov theorem \cite{bombieri1974grand,vinogradov1965density} to show that there are infinitely many primes \(q\) for which
\[
|\mathcal P(q)|\ge \frac{q^3}{32\log q}.
\]
Combining this estimate with the above bound, we obtain our approximate uncertainty principle.  

This directly suggests a natural one-query zero-tester for polynomials \(P\) with exponentiations. Given a polynomial $P$ we sample:
\begin{itemize}
    \item A random admissible prime \(p\in\mathcal P(q)\).
    \item A random point \(v\in\mathbb F_q^n\) for the exponents.
    \item A random point \(u\in\mathbb F_p^n\) for the coefficients.
    \item A generator \(g\) of the order-\(q\) subgroup \(G\leq\mathbb F_p^*\).
    \item A random \(r\in\mathbb Z_q\), and set \(a=g^r\).
\end{itemize}
We then evaluate the algebraic extension
\[
P_{p,q,a}(u,v)
=
\sum_{j=1}^k f_j(u)\,
a^{g_j(v)h_j(v)^{-1}},
\]
declaring the value undefined if some denominator vanishes, and accept iff the value is either \(0\) or undefined.

The soundness proof is as follows. First, by Schwartz--Zippel and a union bound, we bound the probability that all denominators are nonzero and the exponents $\alpha_j=g_j(v)h_j(v)^{-1}$
are pairwise distinct. Conditioned on this event, the test evaluates the ordinary polynomial
\[
H_{a,v}(X)=\sum_{j=1}^k f_j(X)a^{\alpha_j}
\]
at a random point \(u\in\mathbb F_p^n\). If \(H_{a,v}\not\equiv0\), Schwartz--Zippel gives error at most \(d/(q+1)\). If \(H_{a,v}\equiv0\), then some nonzero monomial coefficient vector \(\eta=([X^\mu]f_1,\ldots,[X^\mu]f_k)\), with \(\|\eta\|_1\le kw\), satisfies
\[
\sum_{j=1}^k \eta_j g^{r\alpha_j}=0.
\]
This is exactly the event that a random coordinate of the evaluation vector of a sparse integer polynomial with \(\ell_1\)-norm at most \(kw\) vanishes. The approximate uncertainty principle is precisely a bound on the probability of this event. The bound on the soundness error of the test is then obtained by combining the above estimates.

\section{Preliminaries}
We write $\Z_q$ for $\Z/q\Z$.  If $S$ is finite, $x\sim\Unif(S)$ means that $x$ is sampled uniformly from $S$.  For a vector $v=(v_a)_{a\in A}$, write
\[
   \Supp(v):=\{a\in A:v_a\ne0\}.
\]
All logarithms are natural unless the base is displayed explicitly.

\subsection{Resultants}

We recall the elementary properties of the resultant that will be used throughout.  If $K$ is a field and $f,g\in K[z]$ are nonzero, then
\begin{equation}\label{eq:res-gcd}
   \Res(f,g)=0
   \quad\Longleftrightarrow\quad
   f\text{ and }g\text{ have a common root over }\overline K.
\end{equation}
Equivalently, $\gcd(f,g)\ne1$ in $K[z]$.  If $g$ is monic, Poisson's product formula gives
\begin{equation}\label{eq:poisson}
   |\Res(f,g)|=\prod_{\alpha:g(\alpha)=0}|f(\alpha)|,
\end{equation}
where the product is taken in an algebraic closure, with multiplicities.  

\begin{lemma}[Modular Reduction]\label{lem:resultant-functoriality}
Let \(f,g\in\Z[z]\), and let \(p\) be prime.  If the reductions
\(f_p,g_p\in\F_p[z]\) have a common root over \(\overline{\F}_p\), then $p\mid \Res(f,g)$.
\end{lemma}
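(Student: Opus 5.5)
The plan is to prove the lemma through the Sylvester matrix description of the resultant, which makes reduction mod \(p\) compatible with taking the resultant. Write \(m=\deg f\), \(n=\deg g\), so that \(\Res(f,g)=\det \mathrm{Syl}(f,g)\), where \(\mathrm{Syl}(f,g)\) is the \((m+n)\times(m+n)\) Sylvester matrix with integer entries built from the coefficients of \(f\) and \(g\). The determinant is a polynomial with integer coefficients in the matrix entries, and reduction \(\Z\to\F_p\) is a ring homomorphism. Hence
\[
\Res(f,g)\bmod p=\det\bigl(\mathrm{Syl}(f,g)\bmod p\bigr).
\]
So it suffices to show that the reduced Sylvester matrix is singular over \(\F_p\).

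Next, suppose \(\alpha\in\overline{\F}_p\) is a common root of \(f_p\) and \(g_p\). Consider the vector \(v=(\alpha^{m+n-1},\dots,\alpha,1)\), oriented so that multiplying the reduced Sylvester matrix by \(v\) produces the entries \(\alpha^{i}f_p(\alpha)\) for \(0\le i<n\) and \(\alpha^{j}g_p(\alpha)\) for \(0\le j<m\). All of these entries vanish. Equivalently, the rows of the reduced matrix are the coefficient vectors of \(z^i f_p\) and \(z^j g_p\), written in the formal degrees \(m,n\), and evaluating at \(\alpha\) gives zero. Since \(v\neq0\) (its last coordinate is \(1\)), the reduced matrix has a nontrivial kernel over \(\overline{\F}_p\). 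Its determinant therefore vanishes in \(\overline{\F}_p\), hence in \(\F_p\), and so \(p\mid\Res(f,g)\).

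The one point needing care, and the only real obstacle, is that the leading coefficients of \(f\) or \(g\) may be divisible by \(p\), so \(\deg f_p<m\) can occur. In that case one cannot simply identify the reduction of the integer resultant with \(\Res(f_p,g_p)\) computed in the true degrees of \(f_p,g_p\). The kernel-vector argument above sidesteps this entirely: it uses only the Sylvester matrix in the formal degrees \(m,n\), and it only requires \(f_p(\alpha)=g_p(\alpha)=0\), with no assumption on the leading coefficients. I would therefore present it this way, rather than through \eqref{eq:res-gcd} applied over \(\F_p\), which would require a case split on whether the degrees drop.

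If \(f\) or \(g\) is a nonzero constant, the claim is trivial or vacuous. A nonzero constant polynomial has no roots unless its reduction is \(0\), and in that case \(p\) divides the constant and hence its power, which is the resultant.
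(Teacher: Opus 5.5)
Your argument is correct and is essentially the paper's proof. Both form the Sylvester matrix in the original degrees $m,n$, reduce it modulo $p$, and show the reduced matrix is singular. The only difference is the witness: you use the column vector $(\alpha^{m+n-1},\dots,\alpha,1)$ built from the common root, while the paper takes a linear dependence among the rows, coming from a relation $Af_p+Bg_p=0$ with $\deg A<n$, $\deg B<m$ (i.e.\ from non-coprimality of $f_p,g_p$).

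Your closing remark about ``its power'' needs the other polynomial to have positive degree. When $m=n=0$ the Sylvester matrix is empty and $\Res=1$, so the statement (in the paper too) fails for, e.g., $f=g=p$. This is harmless here, since the lemma is only applied with $g=\Phi_q$.
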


\begin{proof}
Let \(m=\deg f\) and \(n=\deg g\), and form the Sylvester matrix of \(f\) and \(g\) using these original degrees.  Its determinant is \(\Res(f,g)\).  Reducing this matrix modulo \(p\) gives the corresponding Sylvester matrix with the same prescribed sizes for \( f_p\) and \(g_p\), possibly with zero leading coefficients if degrees drop.

If \( f_p\) and \( g_p\) have a common root over \(\overline{\F}_p\), then they are not coprime in \(\F_p[z]\).  Hence there exist nonzero polynomials \(A,B\in\F_p[z]\), with \(\deg A<n\) and \(\deg B<m\), such that
\[
   A f_p+B g_p=0.
\]
Equivalently, the reduced Sylvester matrix has a nontrivial kernel.  Its determinant is therefore zero in \(\F_p\), i.e., $\Res(f,g)\equiv0\pmod p$.
\end{proof}

\subsection{Schwartz--Zippel}

We recall the Schwartz--Zippel lemma \cite{schwartz1980fast,zippel1979probabilistic,demillo1977probabilistic}.

\begin{lemma}[Schwartz--Zippel]\label{lem:schwartz-zippel}
Let $\mathbb F$ be a field, let $H\in \mathbb F[x_1,\ldots,x_n]$ be a nonzero polynomial of total degree at most $d$, and let $S\subseteq \mathbb F$ be finite and nonempty.  Then
\[
   \Pr_{u\sim\Unif(S^n)}\left(H(u)=0\right)\le \frac{d}{|S|}.
\]
\end{lemma}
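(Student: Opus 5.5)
We prove the Schwartz--Zippel bound by induction on the number of variables $n$, peeling off one variable at a time. For $n=1$, a nonzero univariate polynomial $H\in\mathbb F[x_1]$ of degree at most $d$ has at most $d$ roots in the field $\mathbb F$. Hence at most $d$ elements of $S$ are zeros, and the probability is at most $d/|S|$. (If $d=0$, then $H$ is a nonzero constant and the probability is $0$.)

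For the inductive step, we single out $x_1$ and write
\[
   H(x_1,\ldots,x_n)=\sum_{i=0}^{k} x_1^{i}\, H_i(x_2,\ldots,x_n),
\]
where $k$ is the largest power of $x_1$ that actually appears. By this choice of $k$, the leading coefficient $H_k$ is a nonzero polynomial in $n-1$ variables. Moreover, $\deg H_k\le d-k$, since $x_1^kH_k$ consists of monomials of $H$ of total degree at most $d$.

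Sample $u=(u_1,u')$ with $u'=(u_2,\ldots,u_n)\sim\Unif(S^{n-1})$ and $u_1\sim\Unif(S)$ independent. Let $\mathcal E$ be the event $H_k(u')=0$. By the inductive hypothesis, $\Pr(\mathcal E)\le (d-k)/|S|$. On the complement of $\mathcal E$, we fix $u'$ and view $H(x_1,u')$ as a univariate polynomial in $x_1$. Its $x_1^k$-coefficient $H_k(u')$ is nonzero, so this polynomial is nonzero of degree exactly $k$. By the base case, and using that $u_1$ is independent of $u'$, we get
\[
   \Pr\bigl(H(u)=0\mid \neg\mathcal E\bigr)\le \frac{k}{|S|}.
\]
Combining the two bounds gives
\[
   \Pr(H(u)=0)\le \Pr(\mathcal E)+\Pr\bigl(H(u)=0\mid\neg\mathcal E\bigr)\le \frac{d-k}{|S|}+\frac{k}{|S|}=\frac{d}{|S|}.
\]

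The only step requiring care is the degree accounting: we must choose $k$ as the top $x_1$-degree, so that $H_k\ne0$ and $\deg H_k\le d-k$. This is what makes the two error terms sum to exactly $d/|S|$. The conditioning argument also relies on the independence of $u_1$ and $u'$. Everything else is the standard fact that a nonzero univariate polynomial over a field has at most as many roots as its degree.
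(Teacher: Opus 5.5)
Your proof is correct. It is the standard induction on the number of variables, essentially Schwartz's original argument, with the right degree bookkeeping $\deg H_k\le d-k$. The paper itself gives no proof of this lemma and only cites Schwartz, Zippel, and DeMillo--Lipton.

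One cosmetic point: the conditional probability given $\neg\mathcal E$ is undefined when $\Pr(\neg\mathcal E)=0$. You can avoid this by summing over the $u'$ with $H_k(u')\neq 0$ to get $\Pr(H(u)=0\wedge\neg\mathcal E)\le\frac{k}{|S|}\Pr(\neg\mathcal E)$ directly.
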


\subsection{The Number-Theoretic Transform}\label{sec:ntt}

Let $q$ be prime, let $p$ be prime with $q\mid p-1$, and choose an element $\gamma\in\F_p^\ast$ of exact order $q$.  The order-$q$ subgroup is
\[
   G_{p,q}:=\{1,\gamma,\gamma^2,\ldots,\gamma^{q-1}\}\le\F_p^\ast.
\]
Since $G_{p,q}$ is the unique subgroup of order $q$, it is independent of the choice of $\gamma$.
Define the evaluation map
\begin{equation}\label{eq:evaluation-map}
   E=E_{p,q,\gamma}:\F_p[x]/(x^q-1)\mapsto\F_p^{\Z_q},
   \qquad
   E(f):=\bigl(f(\gamma^t)\bigr)_{t\in\Z_q}.
\end{equation}
If $f(x)=\sum_{c\in\Z_q} a_c x^c$, with exponents represented by $0,\ldots,q-1$, then
\[
   E(f)(t)=\sum_{c\in\Z_q} a_c\gamma^{tc}.
\]
Thus, relative to the monomial basis on the domain and the standard basis on the codomain, $E$ is represented by the Fourier matrix
\begin{equation}\label{eq:fourier-matrix}
   M=M_{p,q,\gamma}:=(\gamma^{tc})_{t,c\in\Z_q}.
\end{equation}

\begin{proposition}\label{prop:evaluation-isomorphism}
The map $E$ in \Cref{eq:evaluation-map} is a well-defined $\F_p$-linear isomorphism.
\end{proposition}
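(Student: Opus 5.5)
We need to show three things: $E$ is well defined on the quotient ring, it is $\F_p$-linear, and it is bijective.

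\emph{Well-definedness.} Every $\gamma^t$ satisfies $(\gamma^t)^q=(\gamma^q)^t=1$, so $\gamma^t$ is a root of $x^q-1$. Hence if $f\equiv f'\pmod{x^q-1}$, then $f(\gamma^t)=f'(\gamma^t)$ for every $t$. The value $E(f)(t)$ therefore depends only on the residue class of $f$. It also depends only on $t$ modulo $q$, because $\gamma$ has order $q$, so indexing by $t\in\Z_q$ is legitimate.

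\emph{Linearity.} Evaluation at a fixed point is $\F_p$-linear, so $E$ is linear coordinatewise.

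\emph{Bijectivity.} Both $\F_p[x]/(x^q-1)$ and $\F_p^{\Z_q}$ have dimension $q$ over $\F_p$; the domain has basis $1,x,\dots,x^{q-1}$. So it suffices to prove that $E$ is injective. Let $f$ be a representative of degree at most $q-1$ with $E(f)=0$. Since $\gamma$ has exact order $q$, the points $\gamma^0,\dots,\gamma^{q-1}$ are pairwise distinct, so $f$ vanishes at $q$ distinct points of the field $\F_p$. A nonzero polynomial of degree at most $q-1$ has at most $q-1$ roots, so $f=0$.

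Equivalently, the matrix $M$ in \Cref{eq:fourier-matrix} is a Vandermonde matrix with distinct nodes, so $\det M=\prod_{s<t}(\gamma^t-\gamma^s)\neq 0$. One can also exhibit the inverse directly. Since $q\mid p-1$, $q$ is invertible in $\F_p$, and the orthogonality relation $\sum_{t\in\Z_q}\gamma^{tc}=q\cdot[c=0]$ holds. It follows because $\gamma^c\neq 1$ is a root of $1+x+\dots+x^{q-1}$ whenever $c\neq0$. This gives $M^{-1}=q^{-1}(\gamma^{-tc})_{c,t}$.

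None of these steps is a real obstacle. The only point needing care is using both that $\gamma$ has exact order $q$ (for distinct nodes) and that $q$ is invertible in $\F_p$ (for the explicit inverse).
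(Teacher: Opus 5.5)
Your proof is correct, but it takes a different route from the paper.

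The paper gets bijectivity in one structural step. Since $x^q-1=\prod_{t\in\Z_q}(x-\gamma^t)$ splits into distinct, hence pairwise coprime, linear factors, the Chinese remainder theorem gives $\F_p[x]/(x^q-1)\cong\prod_t\F_p[x]/(x-\gamma^t)\cong\F_p^{\Z_q}$, and this composite is exactly $E$.

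You argue linearly instead: the two spaces have equal dimension, and $E$ is injective by the root bound (equivalently, by the nonvanishing Vandermonde determinant of $M$). You also write down $M^{-1}$ explicitly. Both arguments rest on the same fact, that the $q$ nodes $\gamma^t$ are distinct because $\gamma$ has exact order $q$. The paper uses this only implicitly, when it asserts the factorization into distinct linear factors.

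The CRT route gives more structure at no extra cost. It shows $E$ is an isomorphism of $\F_p$-algebras, carrying multiplication modulo $x^q-1$ (cyclic convolution) to pointwise multiplication, which is what makes the NTT useful in practice.

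Your route is more elementary and works directly with the matrix $M$ that the rest of the paper studies. Your observation $\det M\neq0$ is exactly the $k=q$ case of hyperinvertibility, and it holds for every admissible $p$, unlike general minors. It also produces the explicit inverse that the paper records only after the proof, via the same orthogonality relation you verify.
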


\begin{proof}
Because $\gamma^q=1$, evaluation at $\gamma^t$ depends only on the class of a polynomial modulo $x^q-1$.  Moreover,
\[
   x^q-1=\prod_{t\in\Z_q}(x-\gamma^t)
\]
splits over $\F_p$ into distinct linear factors.  The Chinese remainder theorem gives
\[
   \F_p[x]/(x^q-1)
   \cong
   \prod_{t\in\Z_q}\F_p[x]/(x-\gamma^t)
   \cong
   \F_p^{\Z_q},
\]
and this is exactly the evaluation map $E$.
\end{proof}

\begin{definition}[NTT]
The number-theoretic transform (NTT) associated with $(p,q,\gamma)$ is
\[
   \mathcal F:=E^{-1}:\F_p^{\Z_q}\mapsto \F_p[x]/(x^q-1).
\]
When convenient, we identify $\mathcal F(v)$ with its coefficient vector in $\F_p^{\Z_q}$.
\end{definition}
The inverse is explicit.  For $v\in\F_p^{\Z_q}$ and $c\in\Z_q$,
\begin{equation}\label{eq:inverse-ntt}
   (\mathcal Fv)_c
   =
   \frac1q\sum_{t\in\Z_q} v_t\gamma^{-tc}
\end{equation}
where $q^{-1}$ is taken in $\F_p$.  This follows from the orthogonality relation
\[
   \sum_{t\in\Z_q}\gamma^{t(c-c')}=
   \begin{cases}
   q& \text{ if }c=c'\\
   0& \text{ if }c\ne c'.
   \end{cases}
\]

\section{Uncertainty Principles}

For $v\in\F_p^{\Z_q}$, $\Supp(v)$ denotes the nonzero coordinates of $v$.  For $f=\sum_{c\in\Z_q}a_cx^c\in\F_p[x]/(x^q-1)$, $\Supp(f):=\{c:a_c\ne0\}$.  Under our identification of $\mathcal Fv$ with its coefficient vector, $\Supp(\mathcal Fv)$ is well-defined.

\subsection{Additive Uncertainty}\label{sec:additive}

We prove the first version of the  additive uncertainty principle in the following: We show that there are infinitely many pairs of primes $p$ and $q$ that satisfy the stronger (additive) uncertainty principle.

\begin{theorem}[Additive uncertainty]\label{thm:additive}
Fix a prime $q$.  For all but finitely many primes $p$ with $p\equiv1\pmod q$, the following holds.  For every element $\gamma\in\F_p^\ast$ of order $q$ and every nonzero $v\in\F_p^{\Z_q}$,
\[
   |\Supp(v)|+|\Supp(\mathcal Fv)|\ge q+1.
\]
\end{theorem}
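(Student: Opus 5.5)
The plan is to reduce the statement to \emph{hyperinvertibility} of the Fourier matrix $M_{p,q,\gamma}$, meaning that every square minor is nonzero in $\F_p$. Then we lift each minor to characteristic zero via Chebotarev's theorem and control it with resultants and \Cref{lem:resultant-functoriality}.

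\textbf{Step 1: reduction to minors.} Suppose every square minor of $M$ is nonzero. Let $v\ne0$, put $f=\mathcal Fv$, and write $S=\Supp(f)$, so that $v=E(f)=M f$ is supported on $\Supp(v)$. Assume $|S|+|\Supp(v)|\le q$. Choose any set $T$ of $|S|$ rows disjoint from $\Supp(v)$; this is possible since $q-|\Supp(v)|\ge|S|$. The square submatrix $M_{T,S}$ annihilates the nonzero vector $f|_S$. This contradicts invertibility of that minor. The same argument applies to $\mathcal F=M^{-1}$, whose entries are $q^{-1}\gamma^{-tc}$, so only minors of $M$ are needed.

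\textbf{Step 2: lifting to $\Z[\omega]$.} Let $\omega=e^{2\pi i/q}$, and for row and column sets $T,S\subseteq\Z_q$ of equal size let $D_{T,S}=\det(\omega^{tc})_{t\in T,c\in S}\in\Z[\omega]$. By Chebotarev's theorem on roots of unity (the complex case, \cite{tao2005uncertainty}), $D_{T,S}\ne0$. Define the integer polynomial $P_{T,S}(z)=\det(z^{tc})$, taking exponents $tc$ in $\{0,\dots,(q-1)^2\}$ or reduced mod $q$; either choice works since $\gamma^q=1$. Then $P_{T,S}(\omega)\ne0$. Replacing $\omega$ by $\omega^b$ permutes the index sets via $t\mapsto bt$, so $P_{T,S}(\omega^b)\ne0$ for every $b\in\Z_q^*$ as well. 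Hence $P_{T,S}$ shares no root with $\Phi_q$, and $R_{T,S}:=\Res(P_{T,S},\Phi_q)$ is a nonzero integer by \eqref{eq:res-gcd}.

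\textbf{Step 3: reduction mod $p$.} If $p\equiv1\pmod q$, any $\gamma$ of order $q$ in $\F_p$ is a root of the reduction of $\Phi_q$. If the minor $\det(\gamma^{tc})_{T,S}$ vanishes in $\F_p$, then $\gamma$ is a common root of the reductions of $P_{T,S}$ and $\Phi_q$. By \Cref{lem:resultant-functoriality}, this forces $p\mid R_{T,S}$. There are finitely many pairs $(T,S)$, at most $\binom{2q}{q}$, and each nonzero $R_{T,S}$ has finitely many prime divisors. Excluding all of them leaves all but finitely many $p\equiv1\pmod q$. For each remaining $p$, every $\gamma$ of order $q$ gives a hyperinvertible $M$, and Step 1 concludes the proof.

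\textbf{Main obstacle.} The only nontrivial input is Chebotarev's nonvanishing of minors of the complex Fourier matrix of prime order. I would cite it, or equivalently invoke Caragea et al.\ \cite{caragea2025principal}, rather than reprove it. A minor care point is that degrees of $P_{T,S}$ may drop mod $p$. \Cref{lem:resultant-functoriality} already handles this case. Alternatively, one can argue directly that $p$ divides the norm $N_{\Q(\omega)/\Q}(D_{T,S})=\pm R_{T,S}$ through the prime of $\Z[\omega]$ above $p$ sending $\omega\mapsto\gamma$.
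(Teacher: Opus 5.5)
Your proposal is correct and follows essentially the same route as the paper. Step 1 is the direction (i)$\Rightarrow$(ii) of the paper's hyperinvertibility lemma. Steps 2--3 reproduce the paper's proof of finite-field hyperinvertibility: Chebotarev at every $\omega^b$ gives nonzero integers $\Res(D_{R,C},\Phi_q)$, and the modular-reduction lemma excludes their prime divisors (the paper simply multiplies them into a single $\Delta_q$).
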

To prove \Cref{thm:additive} we adopt the strategy from \cite{tao2005uncertainty}, where it is shown that additive uncertainty is equivalent to invertibility of every square submatrix of the Fourier matrix $M$ (see \Cref{sec:ntt}). This statement was shown in the complex settings, but the same proof works for finite-fields.

\begin{lemma}[Hyperinvertibility and Additive Uncertainty]\label{lem:additive-full-spark}
Let $q$ and $p$ be primes with $q\mid p-1$, let $\gamma\in\F_p^\ast$ have order $q$, and let $M=(\gamma^{tc})_{t,c\in\Z_q}$.  The following are equivalent.
\begin{enumerate}
   \item[(i)] Every square submatrix of $M$ is nonsingular over $\F_p$.
   \item[(ii)] Every nonzero $v\in\F_p^{\Z_q}$ satisfies $|\Supp(v)|+|\Supp(\mathcal Fv)|\ge q+1$.
\end{enumerate}
\end{lemma}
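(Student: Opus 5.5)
We prove both directions by translating supports into statements about submatrices. Writing $f=\mathcal F v$, we have $v=Mf$, where $f$ is the coefficient vector. So the pair $(v,\mathcal Fv)$ with $v\neq 0$ is the same as a nonzero coefficient vector $f$ together with its evaluation vector $Mf$. Set $S=\Supp(f)$, so $|S|=k$ and the number of zero coordinates of $Mf$ is $q-|\Supp(v)|$. The whole argument reduces to one observation: $|\Supp(v)|\le q-k$ holds iff there is a set $T$ of $k$ rows on which $Mf$ vanishes, and that says exactly that $M_{T,S}f_S=0$ with $f_S\neq 0$ supported on the columns $S$.

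For (i)$\Rightarrow$(ii) we argue by contradiction. Suppose some nonzero $v$ has $|\Supp(v)|+|\Supp(\mathcal Fv)|\le q$. Put $f=\mathcal Fv\neq 0$ (it is nonzero because $E$ is an isomorphism by \Cref{prop:evaluation-isomorphism}), and let $S=\Supp(f)$ with $|S|=k\ge 1$. Then $Mf=v$ has at least $q-|\Supp(v)|\ge k$ zero coordinates, so we may choose a set $T$ of exactly $k$ of them. The restriction $f_S$ is nonzero and satisfies $M_{T,S}f_S=(Mf)_T=0$. Hence the $k\times k$ submatrix $M_{T,S}$ is singular, contradicting (i).

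For (ii)$\Rightarrow$(i), suppose some square submatrix $M_{T,S}$ with $|T|=|S|=k$ is singular, and pick a nonzero $x\in\F_p^S$ in its kernel. Extend $x$ by zero to a coefficient vector $f\in\F_p^{\Z_q}$, so $\Supp(f)\subseteq S$ and $f\neq 0$. Let $v=Mf$; it is nonzero since $M$ is invertible, and $\mathcal Fv=f$. The vector $v$ vanishes on $T$, so $|\Supp(v)|\le q-k$ and $|\Supp(\mathcal Fv)|\le k$. The sum is then at most $q$, contradicting (ii).

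There is no real obstacle here. The only points needing care are bookkeeping: fixing the convention that $\mathcal Fv$ is identified with its coefficient vector, so that $v=M(\mathcal Fv)$; and, in the first direction, restricting to the support so that a \emph{square} submatrix results, using $k\le q-|\Supp(v)|$ to guarantee enough vanishing rows. The argument is field-independent, which matches the remark that the complex-case proof of \cite{tao2005uncertainty} carries over verbatim.
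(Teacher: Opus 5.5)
Your proof is correct and takes essentially the same approach as the paper. For (i)$\Rightarrow$(ii) you restrict $v=M(\mathcal Fv)$ to $k$ vanishing rows and the support columns, which yields a singular square minor; for (ii)$\Rightarrow$(i) you extend a kernel vector of a singular minor by zero and apply $M$. You also explicitly check that $\mathcal Fv\neq 0$ and $Mf\neq 0$ via invertibility of $E$, a detail the paper leaves implicit.
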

\begin{proof}
Assume first that every square submatrix is nonsingular.  Let $w:=\mathcal Fv$, and set $S:=\Supp(w)$, $k:=|S|$.  Write $v=Mw$.  Let $Z:=\Z_q\setminus\Supp(v)$
be the zero set of $v$.  Restricting the identity $v=Mw$ to rows in $Z$ and columns in $S$ gives
\[
   0=M_{Z,S}w_S.
\]
If $|Z|\ge k$, choose $Z'\subseteq Z$ with $|Z'|=k$.  Then $M_{Z',S}$ is a nonsingular $k\times k$ matrix, so $w_S=0$, contradicting the definition of $S$ unless $v=0$.  Hence $|Z|\le k-1$, which is equivalent to
\[
   q-|\Supp(v)|\le |\Supp(\mathcal Fv)|-1.
\]
Conversely, suppose that some $k\times k$ submatrix $M_{R,S}$ is singular.  Choose $0\ne a\in\F_p^S$ with $M_{R,S}a=0$, extend it by zero outside $S$, and set $v:=Ma$.  Then $\mathcal Fv=a$, so $|\Supp(\mathcal Fv)|\le k$, while $v$ vanishes on all rows in $R$, so $|\Supp(v)|\le q-k$.  Thus
\[
   |\Supp(v)|+|\Supp(\mathcal Fv)|\le q,
\]
contradicting additive uncertainty.
\end{proof}
We shall use the following classical theorem of Chebotarev, in the form used in \cite{tao2005uncertainty}.

\begin{theorem}[Hyperinvertibility of Fourier Matrices]\label{thm:tao-chebotarev}
Let $q$ be prime and let $\omega=e^{2\pi i/q}$.  For every $1\le k\le q$ and every pair of $k$-element subsets $R,C\subseteq\Z_q$,
\[
   \det(\omega^{rc})_{r\in R,c\in C}\ne0.
\]
\end{theorem}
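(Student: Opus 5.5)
The plan is the classical Chebotarev argument in the algebraic form given by Tao \cite{tao2005uncertainty}. We generalize the determinant to a polynomial in independent variables, factor out the Vandermonde determinant, and show that the remaining factor is a unit modulo the prime $(1-\omega)$ of $\Z[\omega]$ above $q$.

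Write $R=\{r_1<\dots<r_k\}$ and $C=\{c_1<\dots<c_k\}$, with representatives in $\{0,\ldots,q-1\}$. Define
\[
P(z_1,\ldots,z_k):=\det\bigl(z_j^{r_i}\bigr)_{i,j}\in\Z[z_1,\ldots,z_k].
\]
We then have $\det(\omega^{rc})_{r\in R,c\in C}=P(\omega^{c_1},\ldots,\omega^{c_k})$. The polynomial $P$ is alternating, so it vanishes whenever $z_i=z_j$. Hence it is divisible in $\Z[z]$ by the Vandermonde $\Delta(z)=\prod_{i<j}(z_j-z_i)$, since $\Delta$ is a product of pairwise coprime primes $z_j-z_i$. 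This gives
\[
P=\Delta\cdot Q,\qquad Q\in\Z[z_1,\ldots,z_k].
\]
The values $\omega^{c_j}$ are distinct, so $\Delta(\omega^{c_1},\ldots,\omega^{c_k})\neq0$. It therefore suffices to show that $Q(\omega^{c_1},\ldots,\omega^{c_k})\ne0$.

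For this we use reduction modulo the ideal $\mathfrak p=(1-\omega)\subset\Z[\omega]$. Since $\omega\equiv1\pmod{\mathfrak p}$ and $Q$ has integer coefficients,
\[
Q(\omega^{c_1},\ldots,\omega^{c_k})\equiv Q(1,\ldots,1)\pmod{\mathfrak p}.
\]
We also know $\mathfrak p\cap\Z=q\Z$, because $\prod_{b\in\Z_q^*}(1-\omega^b)=\Phi_q(1)=q$ and $\mathfrak p$ is a proper ideal. So it is enough to prove that the integer $Q(1,\ldots,1)$ is not divisible by $q$. In that case $Q(\omega^{c})\notin\mathfrak p$, and in particular $Q(\omega^{c})\ne 0$.

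The main step, and the one I expect to require real work, is the identity
\[
Q(1,\ldots,1)=\prod_{i<j}\frac{r_j-r_i}{j-i}.
\]
This is the Weyl dimension formula for the Schur polynomial $P/\Delta$ evaluated at $(1,\ldots,1)$. To keep the argument self-contained, I would prove it by applying the differential operator $D=\prod_{j=1}^k (z_j\partial_{z_j})^{j-1}$ to both sides of $P=\Delta Q$ and then setting all $z_j=1$.

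On the left-hand side, $z\partial_z$ acts on $z^{r}$ by multiplication by $r$. This yields $\det(r_i^{\,j-1})=\prod_{i<j}(r_j-r_i)$.

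On the right-hand side, we expand by the Leibniz rule. Every term in which fewer than $j-1$ derivatives fall on the $z_j$-part of $\Delta$ vanishes at $z=(1,\ldots,1)$, since $\Delta$ vanishes there to total order $\binom k2$. Only the term with all derivatives on $\Delta$ survives. This gives $Q(1,\ldots,1)\cdot D\Delta|_{z=1}$, and the same computation applied to $\Delta$ itself (the case $r_i=i-1$) gives $D\Delta|_{z=1}=\prod_{i<j}(j-i)$. The bookkeeping in this step is the delicate part, and I would check it carefully.

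Given the identity, the conclusion is immediate. The $r_i$ are distinct residues in $\{0,\ldots,q-1\}$, so $0<|r_j-r_i|<q$ and the numerator is prime to $q$. The denominator is $\prod_{m<k}m!$ with $k\le q$, so it involves only integers less than $q$. Hence $q\nmid Q(1,\ldots,1)$, and therefore $\det(\omega^{rc})_{r\in R,c\in C}\ne0$.
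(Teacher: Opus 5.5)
Your proposal is correct. It is Tao's proof of Chebotarev's theorem from \cite{tao2005uncertainty}: factor out the Vandermonde, reduce modulo $(1-\omega)$, and evaluate $Q(1,\ldots,1)=\prod_{i<j}\frac{r_j-r_i}{j-i}$ via the operator $\prod_j(z_j\partial_{z_j})^{j-1}$. The paper gives no proof of its own and simply cites the theorem in this form from that source. Two points could be tightened: properness of $(1-\omega)$ deserves a line, since otherwise every conjugate $1-\omega^b$ would be a unit, hence so would their product $q$, making $1/q$ an algebraic integer; and the denominator remark is unnecessary, because $Q(1,\ldots,1)\cdot\prod_{i<j}(j-i)=\prod_{i<j}(r_j-r_i)$ together with $q\nmid\prod_{i<j}(r_j-r_i)$ already forces $q\nmid Q(1,\ldots,1)$.
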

One may hope to prove a finite-field analogue of \Cref{thm:tao-chebotarev}, unfortunately such a result cannot be true. It was shown by Zhang \cite{zhang2019chebotarev} that in the finite field settings there exists a Fourier matrix with a singular submatrix. 

To circumvent this obstacle, we show that the finite-field analogue of \Cref{thm:tao-chebotarev} holds for all but finitely-many primes $p$. We shall mention that this fact also follows as a consequence of a theorem proven in a very recent work 
\cite{caragea2025principal}. Here we choose to give an explicit proof, since it serves as a good warm-up for subsequent arguments.

\begin{lemma}[Finite Field Hyperinvertibility]\label{lem:finite-exceptional-set}
Fix a prime $q$.  There is a finite set $B_q$ of primes such that, whenever $p\equiv1\pmod q$ and $p\notin B_q$, every square submatrix of $M_{p,q,\gamma}$ is nonsingular over $\F_p$ for every element $\gamma\in\F_p^\ast$ of order $q$.
\end{lemma}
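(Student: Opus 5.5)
The plan is to lift each minor to characteristic zero and use \Cref{thm:tao-chebotarev} together with a norm (resultant-type) argument. Fix $k$-subsets $R,C\subseteq\Z_q$ with representatives in $\{0,\dots,q-1\}$, and consider the integer polynomial
\[
   D_{R,C}(z):=\det\bigl(z^{rc}\bigr)_{r\in R,c\in C}\in\Z[z].
\]
For any $\gamma$ of order $q$ in $\F_p^\ast$, the minor $\det M_{R,C}$ over $\F_p$ equals the reduction of $D_{R,C}$ evaluated at $\gamma$. Here we use that $\gamma^{rc}$ depends only on $rc \bmod q$ because $\gamma^q=1$, and the same holds for $\omega$. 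So it suffices to show that, outside a finite set of primes, $D_{R,C}$ has no root of order $q$ modulo $p$.

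By \Cref{thm:tao-chebotarev}, $D_{R,C}(\omega)\ne0$. Since $D_{R,C}$ has rational coefficients and $\Phi_q$ is irreducible over $\Q$, it follows that $D_{R,C}(\omega^b)\ne0$ for every $b\in\Z_q^\ast$, so $\gcd(D_{R,C},\Phi_q)=1$ over $\Q$. Hence $N_{R,C}:=\Res(D_{R,C},\Phi_q)$ is a nonzero integer by \Cref{eq:res-gcd}. (If $D_{R,C}$ happens to be a nonzero constant the claim is trivial; it cannot be the zero polynomial, since its value at $\omega$ is nonzero.)

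Now suppose $p\equiv1\pmod q$ and some $\gamma$ of order $q$ makes $\det M_{R,C}=0$ in $\F_p$. Then $\gamma$ is a common root of the reductions of $D_{R,C}$ and $\Phi_q$, because $\Phi_q(\gamma)=(\gamma^q-1)/(\gamma-1)=0$ as $\gamma\ne1$. By \Cref{lem:resultant-functoriality}, $p\mid N_{R,C}$. Define
\[
   B_q:=\bigcup_{R,C}\{p:\ p\mid N_{R,C}\},
\]
a finite union over the at most $4^q$ pairs $(R,C)$ of finite sets of prime divisors of nonzero integers. This set is finite and independent of $\gamma$, which gives the lemma.

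Optionally, Poisson's formula \Cref{eq:poisson} gives the quantitative bound $|N_{R,C}|\le\prod_b|D_{R,C}(\omega^b)|\le(k!)^{q-1}$, since each entry has modulus $1$. So every prime of $B_q$ is at most roughly $q^{q^2}$, matching the remark in the introduction.

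The main obstacle is conceptual rather than computational. It is the step from nonvanishing at the single root $\omega$, supplied by Chebotarev's theorem, to coprimality with $\Phi_q$. That step relies on the Galois conjugates $\omega^b$ all being roots of the same irreducible $\Phi_q$, together with a careful check that reducing the exponents $rc$ modulo $q$ is harmless on both the complex and finite-field sides. Everything else is bookkeeping with \Cref{lem:resultant-functoriality}.
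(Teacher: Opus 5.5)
Your proposal is correct and follows essentially the same route as the paper: lift each minor to $D_{R,C}\in\Z[z]$, use Chebotarev to get $\Res(D_{R,C},\Phi_q)\neq 0$, invoke the modular reduction lemma, and take $B_q$ to be the prime divisors of all these resultants. The one small difference is how you get $D_{R,C}(\omega^b)\neq0$ for every $b\in\Z_q^\ast$: you use irreducibility of $\Phi_q$ over $\Q$, whereas the paper re-applies Chebotarev to the dilated column set $bC$, and both arguments are valid.
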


\begin{proof}
Fix $1\le k\le q$ and $k$-element sets $R,C\subseteq\Z_q$.  Choose integer representatives in $\{0,\ldots,q-1\}$ and define
\[
   D_{R,C}(z):=\det(z^{rc})_{r\in R,c\in C}\in\Z[z],
\]
where each exponent $rc$ is represented modulo $q$ by an integer in $\{0,\ldots,q-1\}$.  Let
   $\Phi_q(z):=1+z+\cdots+z^{q-1}$
be the $q$-th cyclotomic polynomial.
For every $b\in\Z_q^\ast$, multiplication by $b$ permutes $\Z_q$, so $bC$ is again a $k$-element subset of $\Z_q$.  By \Cref{thm:tao-chebotarev},
\[
   D_{R,C}(\omega^b)=\det(\omega^{brc})_{r\in R,c\in C}
   =\det(\omega^{r c'})_{r\in R,c'\in bC}
   \ne0.
\]
Thus $D_{R,C}$ and $\Phi_q$ have no common complex root, and $\Res(D_{R,C},\Phi_q)$
is a nonzero integer.
Let
\[
   \Delta_q:=\prod_{k=1}^q\prod_{\substack{R,C\subseteq\Z_q\\ |R|=|C|=k}}\Res(D_{R,C},\Phi_q)\in\Z\setminus\{0\},
\]
and let $B_q$ be the set of prime divisors of $\Delta_q$. Suppose $p\equiv1\pmod q$, $p\notin B_q$, and $\gamma\in\F_p^\ast$ has order $q$.  Then $\Phi_q(\gamma)=0$ in $\F_p$.  If some minor $M_{R,C}$ were singular, then
\[
   D_{R,C}(\gamma)=0\quad\text{in }\F_p.
\]
Hence the reductions of $D_{R,C}$ and $\Phi_q$ modulo $p$ would have the common root $\gamma$.  By \Cref{lem:resultant-functoriality}, this implies
\[
   \Res(D_{R,C},\Phi_q)\equiv0\pmod p,
\]
so $p\mid \Delta_q$, a contradiction.
\end{proof}
The proof of our main theorem is now straightforward.
\begin{proof}[Proof of \Cref{thm:additive}]
For fixed $q$, take $B_q$ from \Cref{lem:finite-exceptional-set}.  If $p\equiv1\pmod q$ and $p\notin B_q$, the matrix $M_{p,q,\gamma}$ is hyperinvertible for every order-$q$ element $\gamma$. \Cref{lem:additive-full-spark} gives the claimed uncertainty inequality.
\end{proof}

\subsection{Approximate Uncertainty}
The proof above is qualitative because it multiplies all submatrices.  We next keep a single resultant at a time and bound the number of primes that can divide it.

\begin{lemma}[Bad Primes]\label{lem:bad-prime-principle}
Let $q$ be prime, let $F(z)\in\Z[z]$, and $\omega=e^{2\pi i/q}$.  Suppose that, for some real $B\ge1$,
\[
   F(\omega^b)\ne0
   \quad\text{and}\quad
   |F(\omega^b)|\le B
   \qquad\text{for every }b\in\Z_q^\ast.
\]
Then the number of primes $p\equiv1\pmod q$ for which there exists an element $\gamma\in\F_p^\ast$ of order $q$ with
\[
   F(\gamma)=0\quad\text{in }\F_p
\]
is at most $(q-1)\log_2 B$.
\end{lemma}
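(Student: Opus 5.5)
The plan is to encode all the bad primes into a single nonzero integer, the resultant $\Res(F,\Phi_q)$, and then count its prime divisors using its size. Here $\Phi_q(z)=1+z+\cdots+z^{q-1}$ is monic, and its complex roots are exactly $\omega^b$ for $b\in\Z_q^\ast$, each with multiplicity one. By hypothesis $F(\omega^b)\neq 0$ for all these roots, so $F$ and $\Phi_q$ have no common complex root. By \Cref{eq:res-gcd}, $\Res(F,\Phi_q)\neq0$, and since both polynomials have integer coefficients, it is a nonzero integer. (If $F$ is constant, the hypothesis forces it to be a nonzero integer of absolute value at most $B$. It then has no roots modulo any prime not dividing it, and the same argument below applies, or one handles this case directly.)

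Next I bound its size using Poisson's formula \Cref{eq:poisson}, applied with $g=\Phi_q$ monic:
\[
|\Res(F,\Phi_q)|=\prod_{b\in\Z_q^\ast}|F(\omega^b)|\le B^{q-1}.
\]
One point needs care: the sign and order conventions of the resultant. Poisson's formula is usually stated as $\Res(\Phi_q,F)$ up to sign, and absolute values make the sign irrelevant. If $F$ has a leading coefficient, the standard formula $\Res(g,f)=\mathrm{lc}(g)^{\deg f}\prod_{g(\alpha)=0}f(\alpha)$ is used with $g=\Phi_q$ monic, so no leading coefficient of $F$ appears.

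Now let $p\equiv1\pmod q$ be a prime for which some $\gamma\in\F_p^\ast$ of order $q$ satisfies $F(\gamma)=0$ in $\F_p$. Since $\gamma\neq1$ and $\gamma^q=1$, we get $(\gamma-1)\Phi_q(\gamma)=0$, so $\Phi_q(\gamma)=0$ in $\F_p$. Thus the reductions of $F$ and $\Phi_q$ modulo $p$ share the root $\gamma$, and \Cref{lem:resultant-functoriality} gives $p\mid\Res(F,\Phi_q)$. Every bad prime therefore divides a fixed nonzero integer $N$ with $|N|\le B^{q-1}$. Distinct primes dividing $N$ are each at least $2$, so if there are $m$ of them then $2^m\le|N|\le B^{q-1}$. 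This gives $m\le(q-1)\log_2 B$.

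I do not expect a real obstacle. The only delicate step is the application of \Cref{lem:resultant-functoriality} when $\deg F$ drops modulo $p$. That lemma already handles this case, since it fixes the Sylvester matrix sizes from the integer degrees. The other minor points are the resultant-order convention in Poisson's formula, and the degenerate case where $F$ is constant.
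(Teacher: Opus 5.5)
Your proof is correct and follows the paper's argument essentially step for step. You show $\Res(F,\Phi_q)$ is a nonzero integer, use the modular reduction lemma to see that every bad prime divides it, bound it by $B^{q-1}$ via Poisson's formula with $\Phi_q$ monic, and count distinct prime divisors via $2^m\le|N|$. Your extra remarks (the check $(\gamma-1)\Phi_q(\gamma)=\gamma^q-1=0$, the sign convention, and the constant-$F$ case) just make explicit details the paper leaves implicit.
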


\begin{proof}
The hypotheses imply that $F$ and $\Phi_q$ have no common complex root.  Hence
\[
   \Res(F,\Phi_q)\in\Z\setminus\{0\}.
\]
If $p\equiv1\pmod q$ and $F(\gamma)=0$ for an element $\gamma\in\F_p^\ast$ of order $q$, then also $\Phi_q(\gamma)=0$.  Thus the reductions of $F$ and $\Phi_q$ modulo $p$ have a common root.  By \Cref{lem:resultant-functoriality}, $p\mid \Res(F,\Phi_q)$.

By \Cref{eq:poisson} (Poisson's formula),
\[
   \abs{\Res(F,\Phi_q)}=\prod_{b\in\Z_q^\ast}\abs{F(\omega^b)},
\]
and therefore $|\Res(F,\Phi_q)|\le B^{q-1}$.  If a nonzero integer $N$ has $m$ distinct prime divisors, then $2^m\le |N|$.  Consequently $\Res(F,\Phi_q)$ has at most $\log_2(B^{q-1})=(q-1)\log_2B$ distinct prime divisors.
\end{proof}
Let
\begin{equation}\label{eq:Pq-def}
   \mathcal{P}(q):=\{p\le q^4:p\text{ prime and }p\equiv1\pmod q\}.
\end{equation}
We shall use primes $q$ for which
\begin{equation}\label{eq:many-primes}
   |\mathcal{P}(q)|=\pi(q^4;q,1)\ge \frac{q^3}{32\log q}.
\end{equation}
We prove an approximate uncertainty principle for a fixed integer sparse vector, but averaged over the random choice of primes $p \in \mathcal{P}(q)$.
To prove the approximate uncertainty principle, all needs to be done is to show that infinitely many such $q$ exist. Here we simply postulate that such $q$ exists, and we refer to \Cref{sec:bounds} for a proof of this fact.
\begin{theorem}[Approximate Uncertainty]\label{thm:one-query-uncertainty}
Let $q$ be prime and suppose \Cref{eq:many-primes} holds.  Let $0\ne c\in\Z^{\Z_q}$ satisfy
\[
   |\Supp(c)|<q
   \qquad\text{and}
   \qquad\|c\|_1:=\sum_{a\in\Z_q}|c_a|\le W
\]
where $W\ge2$. Then 
\[
   \E_p\bigl(|\Supp(\mathcal F^{-1}(c))|\bigr)
   \ge
   q-1-\frac{32(q-1)\log q}{q^2}\log_2 W.
\]
\end{theorem}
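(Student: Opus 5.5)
We interpret the expectation as $p\sim\Unif(\mathcal P(q))$, with $\gamma$ an arbitrary fixed element of order $q$ in $\F_p^\ast$ (or any choice depending on $p$). The vector $\mathcal F^{-1}(c)=E(c)$ is then the evaluation vector $\bigl(F(\gamma^t)\bigr)_{t\in\Z_q}$ of the reduction modulo $p$ of $F(z)=\sum_{a}c_a z^a\in\Z[z]$. The plan is to bound the expected number of zero coordinates. Write
\[
   |\Supp(E(c))| = q - \#\{t\in\Z_q : F(\gamma^t)\equiv 0 \pmod p\}.
\]
The coordinate $t=0$ is evaluation at $1$ and may vanish, so we simply give it up; this costs at most $1$ and accounts for the $q-1$ in the bound. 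For $t\in\Z_q^\ast$, $\gamma^t$ is itself an element of order $q$.

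\textbf{Step 1: $F$ meets the hypotheses of \Cref{lem:bad-prime-principle} with $B=W$.} Since $c\neq0$ and $|\Supp(c)|<q$, we claim $F(\omega^b)\ne0$ for all $b\in\Z_q^\ast$. If some $F(\omega^b)=0$, then, since $\Phi_q$ is irreducible over $\mathbb Q$ and $F$ is reduced of degree $\le q-1$, we get $\Phi_q\mid F$. Degree considerations then force $F=\lambda\Phi_q$ with $\lambda\ne0$, which has full support $q$, a contradiction. The bound $|F(\omega^b)|\le\|c\|_1\le W$ is the triangle inequality.

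\textbf{Step 2: counting bad primes.} By \Cref{lem:bad-prime-principle}, the set of primes $p\in\mathcal P(q)$ for which some order-$q$ element $\gamma'$ satisfies $F(\gamma')=0$ in $\F_p$ has size at most $(q-1)\log_2W$. Call these primes bad. For a good prime, $F(\gamma^t)\ne0$ for every $t\in\Z_q^\ast$, so $|\Supp(E(c))|\ge q-1$. For a bad prime we use the trivial bound $|\Supp(E(c))|\ge 0$; in fact the number of roots of the degree $<q$ polynomial can be as large as $q-1$, and losing $q-1$ is what we accept.

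\textbf{Step 3: averaging.} Using \Cref{eq:many-primes},
\[
   \E_p|\Supp(E(c))| \ge (q-1)\Bigl(1-\frac{(q-1)\log_2 W}{|\mathcal P(q)|}\Bigr)
   \ge q-1-\frac{32(q-1)^2\log q}{q^3}\log_2W .
\]
Since $(q-1)^2/q^3\le (q-1)/q^2$, this gives the stated bound.

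The main subtle point is Step 1, the irreducibility argument. It must handle the representation of exponents in $\{0,\dots,q-1\}$, so that $\deg F\le q-1$ and divisibility by $\Phi_q$ forces a scalar multiple. Everything else is bookkeeping, once we keep the bad-prime loss as $q-1$ per bad prime rather than attempting a finer count.
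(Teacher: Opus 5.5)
Your proposal is correct and follows essentially the same route as the paper: the cyclotomic irreducibility argument to show $C(\omega^b)\ne0$, then \Cref{lem:bad-prime-principle} with $B=W$, then averaging over $p\in\mathcal P(q)$ using \Cref{eq:many-primes}. The only difference is bookkeeping. You charge a loss of $q-1$ per bad prime, getting $(q-1)^2/q^3$ before relaxing it. The paper instead bounds the expected zero fraction by $\frac1q+\Pr[\text{bad}]$, which after multiplying by $q$ gives the stated $(q-1)/q^2$ term directly.
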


\begin{proof}
Let $C(z):=\sum_{a\in\Z_q} c_a z^a\in\Z[z]$. We first show that $C$ does not vanish at any primitive $q$-th root of unity. If $C(\omega)=0$ for a primitive root $\omega$, then the minimal polynomial $\Phi_q$ of $\omega$ over $\Q$ divides $C$ in $\Q[z]$.  Since $\deg C\le q-1=\deg\Phi_q$, this forces $C=\lambda\Phi_q$ for some $\lambda\in\Q$.  But $\Phi_q=1+z+\cdots+z^{q-1}$ has support $q$, whereas $0\ne C$ has support strictly smaller than $q$.  This is impossible.  The same argument applies to every primitive root $\omega^b$, for $b\in\Z_q^\ast$.

Moreover for all $b\in\Z_q^\ast$,
\[
   |C(\omega^b)|\le\sum_a |c_a|\le W.
\]
By \Cref{lem:bad-prime-principle}, the number of primes $p\equiv1\pmod q$ for which there exists an order-$q$ element $h\in\F_p^\ast$ with $C(h)=0$ in $\F_p$ is at most $(q-1)\log_2W$.  Call these primes bad.

If $p\in\mathcal P(q)$ is not bad, then for every $t\in\Z_q^\ast$, the element $\gamma_p^t$ has order $q$, and hence
\[
   \mathcal{F}^{-1}(c)(t) = E_{p,q,\gamma_p}(c)(t)=C(\gamma_p^t)\ne0.
\]
Thus, for non-bad $p$, the evaluation vector can vanish only at $t=0$, and the zero fraction is at most $1/q$.  For bad $p$ we use the trivial bound $1$.  Therefore
\[
   \E_p\left(
   \frac1q\bigl|\{t:E_{p,q,\gamma_p}(c)(t)=0\}\bigr|
   \right)
   \le
   \frac1q+\frac{(q-1)\log_2W}{|\calP(q)|}.
\]
Using \Cref{eq:many-primes} gives the claimed bound.  The desired statement follows by multiplying the zero-fraction inequality by $q$ and subtracting from $q$.
\end{proof}

\subsection{Bounds on Admissible Primes}\label{sec:bounds}
For $n\ge1$, let $\Lambda(n)$ denote the von Mangoldt function.  For $(a,q)=1$, define
\[
   \psi(x;q,a):=\sum_{\substack{n\le x\\ n\equiv a\, (q)}}\Lambda(n),
   \qquad
   \theta(x;q,a):=\sum_{\substack{p\le x\\ p\equiv a\, (q)}}\log p.
\]
We use the following standard form of Bombieri--Vinogradov \cite{bombieri1974grand,vinogradov1965density,vaughanbombieri}.

\begin{theorem}[Bombieri--Vinogradov]\label{thm:bombieri-vinogradov}
For every $A>0$ there are constants $B=B(A)$ and $C=C(A)$ such that, for all $x\ge2$ and all $Q\le x^{1/2}(\log x)^{-B}$,
\[
   \sum_{q\le Q}
   \max_{(a,q)=1}
   \max_{2\le y\le x}
   \left|\psi(y;q,a)-\frac{y}{\varphi(q)}\right|
   \le
   C\frac{x}{(\log x)^A}.
\]
\end{theorem}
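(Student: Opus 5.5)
This is the classical Bombieri--Vinogradov theorem, and in the paper it is simply imported from the literature. If I were to prove it, I would follow Vaughan's route \cite{vaughanbombieri}, which combines the multiplicative large sieve with Vaughan's identity.

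\textbf{Step 1: pass to characters.} By orthogonality of Dirichlet characters,
\[
\psi(y;q,a)=\frac{1}{\varphi(q)}\sum_{\chi \bmod q}\bar\chi(a)\psi(y,\chi).
\]
The principal character accounts for the main term $y/\varphi(q)$, up to an error $O((\log qy)^2)$ from prime powers dividing $q$. Each character $\chi \bmod q$ is induced by a primitive $\chi^*$ modulo some $r\mid q$. Grouping moduli by the conductor $r$, and using $\sum_{q\le Q,\, r\mid q}1/\varphi(q)\ll (\log Q)/\varphi(r)$, the left-hand side is bounded by
\[
\log Q\sum_{r\le Q}\frac{1}{\varphi(r)}\sideset{}{^*}\sum_{\chi \bmod r}\max_{y\le x}|\psi(y,\chi)|
\]
plus negligible terms.

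\textbf{Step 2: small conductors.} For $r\le Q_1:=(\log x)^{A+2}$, I would invoke the Siegel--Walfisz theorem. It gives $\max_{y\le x}|\psi(y,\chi)|\ll x(\log x)^{-3A}$ for every non-principal $\chi$ of such conductor. The total contribution of these conductors is then acceptable. This step is also where the constant $C(A)$ becomes ineffective.

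\textbf{Step 3: large conductors.} For $Q_1<r\le Q$, I would split dyadically, $R<r\le 2R$. It then suffices to show
\[
\sum_{r\sim R}\frac{r}{\varphi(r)}\sideset{}{^*}\sum_{\chi}\max_{y\le x}|\psi(y,\chi)|
\ll\bigl(x+x^{5/6}R+x^{1/2}R^2\bigr)(\log x)^4 .
\]
Dividing by $R$ and using $R\ge Q_1$ and $Q\le x^{1/2}(\log x)^{-B}$ gives a bound of $x(\log x)^{-A}$ once $B=A+5$, say.

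To estimate $\psi(y,\chi)$, I would apply Vaughan's identity with parameters $U=V=x^{1/3}$ (adjusted with $R$). This writes $\Lambda$ as type I sums $\sum_{m\le U}a_m\sum_{n}\chi(mn)$ plus type II bilinear sums $\sum_{m>U,\,n>V}a_m b_n\chi(mn)$, where the coefficients satisfy $|a_m|\le \log m$ and $|b_n|\le d(n)$ or $\Lambda(n)$.

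The type I sums are handled by the Pólya--Vinogradov bound $\sum_{n\le N}\chi(n)\ll r^{1/2}\log r$. The type II sums require:
\begin{enumerate}
\item removing the constraint $mn\le y$ and the maximum over $y$, using Perron's formula or the standard truncated integral $\int_{-T}^{T}$ with kernel $\sin(t\log\beta)/t$, at the cost of a factor $\log x$;
\item splitting $m$ dyadically;
\item applying Cauchy--Schwarz together with the multiplicative large sieve inequality
\[
\sum_{r\le R}\frac{r}{\varphi(r)}\sideset{}{^*}\sum_{\chi}\Bigl|\sum_{n\le N}a_n\chi(n)\Bigr|^2\le(N+R^2)\sum|a_n|^2
\]
to each of the two factors.
\end{enumerate}
This produces
\[
\ll (M+R^2)^{1/2}(N+R^2)^{1/2}\,(\textstyle\sum|a_m|^2)^{1/2}(\sum|b_n|^2)^{1/2},
\]
which, with $MN\asymp x$ and the choice of $U,V$, yields the claimed bound.

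The main obstacle is this type II bilinear estimate. The difficulty is in bookkeeping the choice of $U,V$ against $R$ and in removing the cutoff $mn\le y$ uniformly in $y$, so that only a power-of-log loss appears. Everything else (the character reduction and Siegel--Walfisz) is routine.
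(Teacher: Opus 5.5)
The paper gives no proof of this theorem. It is quoted as a standard result, citing Bombieri, Vinogradov and Vaughan, and your outline (reduction to primitive characters, Siegel--Walfisz for small conductors, Vaughan's identity plus the multiplicative large sieve for the rest) is the cited Vaughan argument, so the approach is the same. The bookkeeping needs repair in three places: the principal character leaves the error $(\psi(y)-y)/\varphi(q)$, which needs the prime number theorem with error term and is not $O((\log qy)^2)$; since Step 1 loses a factor $\log Q$ and Step 3 a factor $(\log x)^4$, the conductors $r>Q_1$ contribute about $x(\log x)^5/Q_1$, so you must take $Q_1=(\log x)^{A+5}$ rather than $(\log x)^{A+2}$ (harmless, as Siegel--Walfisz saves more than any fixed power of $\log x$); and one type I sum in Vaughan's identity has its rough variable running up to $UV$ rather than $U$, so its part above $U$ must also be treated bilinearly to reach your displayed intermediate bound.
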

The next lemma shows that there are infinitely many $q$ with the desired property.
\begin{lemma}[Admissible Primes]\label{lem:many-primes}
There are infinitely many primes $q$ satisfying \Cref{eq:many-primes}.
\end{lemma}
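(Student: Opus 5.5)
We apply \Cref{thm:bombieri-vinogradov} with $x=Q^4$ and $A=2$, and average over primes $q$ in a dyadic range $q\in(Q/2,Q]$. The key point is that $Q\le x^{1/4}$, which is far below the threshold $x^{1/2}(\log x)^{-B}$, so the theorem applies for all large $Q$. The plan has three steps: first pass from $\psi$ to $\theta$ to counting primes, then show that the average error is small compared with the main term, and finally conclude by pigeonhole that some (in fact most) $q$ in the range are admissible. Letting $Q\to\infty$ then produces infinitely many such $q$.

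\emph{Step 1: main term.} For a prime $q\in(Q/2,Q]$, set $y_q:=q^4\le x$. \Cref{thm:bombieri-vinogradov} controls $E(q):=|\psi(y_q;q,1)-y_q/(q-1)|$. Since $\sum_{q\le Q}E(q)\le Cx/(\log x)^2$, the number of primes $q\in(Q/2,Q]$ with $E(q)> y_q/(2(q-1))$ is at most
\[
\frac{Cx/(\log x)^2}{\min_q y_q/(2(q-1))}\ll \frac{Q^4/(\log Q)^2}{Q^3}= \frac{Q}{(\log Q)^2}.
\]
Here we used $y_q/(q-1)\ge (Q/2)^4/Q\gg Q^3$. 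By the prime number theorem there are $\gg Q/\log Q$ primes in $(Q/2,Q]$, so for large $Q$ some (indeed most) such $q$ satisfy $\psi(q^4;q,1)\ge q^4/(2(q-1))\ge q^3/2$.

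\emph{Step 2: from $\psi$ to $\pi$.} Prime powers contribute negligibly: $\psi(y;q,1)-\theta(y;q,1)\le \sum_{k\ge2}\theta(y^{1/k})\ll y^{1/2}\log y$, which for $y=q^4$ is $O(q^2\log q)=o(q^3)$. Hence $\theta(q^4;q,1)\ge q^3/2-o(q^3)\ge q^3/4$ for large $q$. Each prime $p\le q^4$ satisfies $\log p\le 4\log q$, so
\[
\pi(q^4;q,1)\ge \frac{\theta(q^4;q,1)}{4\log q}\ge\frac{q^3}{16\log q}\ge\frac{q^3}{32\log q}.
\]
This is \Cref{eq:many-primes}.

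\emph{Main obstacle.} The only delicate point is the bookkeeping in Step 1. We must make sure that the $\max_{y\le x}$ in \Cref{thm:bombieri-vinogradov} legitimately covers the $q$-dependent choice $y=q^4$, which it does since $q^4\le Q^4=x$. We must also check that the exceptional set, of size $O(Q/(\log Q)^2)$, is genuinely smaller than the $\gg Q/\log Q$ primes in the dyadic range. Both constraints hold comfortably because $Q=x^{1/4}$ sits well inside the Bombieri--Vinogradov range. Choosing $A=2$ gives the required saving of one extra logarithm, and any $A>1$ would suffice.
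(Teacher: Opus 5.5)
Your proposal is correct and takes essentially the same route as the paper. Both arguments apply Bombieri--Vinogradov with $x$ of order $Q^4$ and use Markov's inequality to show that only $O(Q/(\log Q)^{A})$ moduli in a dyadic range violate $|\psi(q^4;q,1)-q^4/(q-1)|\le q^4/(2(q-1))$. They then use the prime number theorem to find a prime modulus outside this exceptional set, pass from $\psi$ to $\theta$ (prime powers contribute only $O(q^2\log q)$), and finally pass to $\pi(q^4;q,1)$ via $\log p\le 4\log q$. Your remark that any $A>1$ suffices is right; the paper takes $A>2$.
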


\begin{proof}
First compare $\psi$ and $\theta$. For $y\ge3$,
\begin{align*}
   0\le \psi(y;q,1)-\theta(y;q,1)
   &\le
   \sum_{m=2}^{\lfloor\log_2 y\rfloor}\sum_{p\le y^{1/m}}\log p  \\
   &\le
   (\log y)\sum_{m=2}^{\lfloor\log_2 y\rfloor}\frac{y^{1/m}}{m},
\end{align*}
using the elementary bound $\sum_{p\le t}\log p\le t\log t$.  With $y=q^4$, this gives, for $q\ge3$,
\begin{equation}\label{eq:psi-theta}
   0\le \psi(q^4;q,1)-\theta(q^4;q,1)
   \le
   2q^2\log q+4q^{4/3}\log q\,(1+\log(4\log q)).
\end{equation}
Indeed, the term $m=2$ contributes at most $2q^2\log q$, and the remaining terms are bounded by
\[
   4q^{4/3}\log q\sum_{m=3}^{\lfloor\log_2(q^4)\rfloor}\frac1m
   \le
   4q^{4/3}\log q\,(1+\log(4\log q)).
\]
Fix $A>2$, and let $B,C$ be as in \Cref{thm:bombieri-vinogradov}.  For a large parameter $N$, set
\[
   x:=(2N)^4,
   \qquad
   Q:=x^{1/2}(\log x)^{-B}.
\]
For all sufficiently large $N$, the interval $[N,2N]$ is contained in $[1,Q]$.  Let $E_N$ be the set of integers $q\in[N,2N]$ for which
\[
   \left|\psi(q^4;q,1)-\frac{q^4}{\varphi(q)}\right|>\frac{q^4}{2\varphi(q)}.
\]
For $q\in[N,2N]$, the right-hand side is $\gg N^3$. Bombieri--Vinogradov and Markov's inequality therefore give
\[
   |E_N|=O\!\left(\frac{N}{(\log N)^A}\right).
\]
By the prime number theorem, the interval $[N,2N]$ contains $\gg N/\log N$ primes.  Since $A>2$, for all sufficiently large $N$ there is a prime $q\in[N,2N]\setminus E_N$.  Taking $N\to\infty$ gives infinitely many such primes. For any such prime $q$, since $\varphi(q)=q-1$,
\[
   \psi(q^4;q,1)
   \ge
   \frac{q^4}{2(q-1)}
   \ge
   \frac{q^3}{4}
\]
for $q\ge2$.  Combining this with \Cref{eq:psi-theta}, and taking $q$ sufficiently large along the infinite sequence, gives
\[
   \theta(q^4;q,1)\ge \frac{q^3}{8}.
\]
Finally,
\[
   \theta(q^4;q,1)
   \le
   \pi(q^4;q,1)\log(q^4)
   =4\pi(q^4;q,1)\log q,
\]
so \Cref{eq:many-primes} follows.
\end{proof}

\subsection{Multiplicative Uncertainty}

We refer to the standard Donoho–Stark \cite{donoho1989uncertainty,wigderson2021uncertainty} uncertainty principle as the \emph{multiplicative} uncertainty principle. For completeness, we recall here the finite-field analogue of Donoho–Stark uncertainty and we refer the reader to \cite{borello2022uncertainty} for a proof. However, we wish to emphasize that this is not going to be useful for our work and we are instead interested in stronger \emph{additive} uncertainty.

\begin{theorem}[Multiplicative Uncertainty]\label{thm:multiplicative}
Let $q$ be prime, let $p$ be prime with $q\mid p-1$, and let $\mathcal F$ be the NTT above.  For every nonzero $v\in\F_p^{\Z_q}$,
\[
   |\Supp(v)|\cdot|\Supp(\mathcal Fv)|\ge q.
\]
\end{theorem}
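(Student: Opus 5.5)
We prove the multiplicative bound $|\Supp(v)|\cdot|\Supp(\mathcal Fv)|\ge q$ with the classical Donoho--Stark argument, using only that the Fourier matrix has entries in a finite group of roots of unity and satisfies orthogonality. Let $w:=\mathcal Fv$, so $v=Mw$ with $M=(\gamma^{tc})_{t,c\in\Z_q}$, and write $S:=\Supp(w)$ and $T:=\Supp(v)$. Over $\C$ one would use $\ell^\infty$ and $\ell^1$ norms; over $\F_p$ there is no norm, so we replace norm estimates by a linear-algebra argument.

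The first step is to reduce the problem to a rank statement. Because $v_t=\sum_{c\in S}w_c\gamma^{tc}$, the restriction of $v$ to any set of $|S|$ consecutive indices determines $w_S$. Indeed, for $t\in\{t_0,\dots,t_0+|S|-1\}$, the submatrix $(\gamma^{tc})_{t,c\in S}$ is, after factoring out $\gamma^{t_0c}$ from each column, a Vandermonde matrix in the distinct nodes $\gamma^c$ for $c\in S$. These nodes are distinct because $\gamma$ has order $q$, so the submatrix is nonsingular over $\F_p$. Since $w_S\neq0$, it follows that $v$ cannot vanish on any window of $|S|$ consecutive elements of $\Z_q$.

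The second step is a counting argument. Partition the cyclic group $\Z_q$ into $|S|$-long cyclic windows, or average over all $q$ cyclic shifts of a single window of length $|S|$. Each index $t$ lies in exactly $|S|$ of the $q$ windows $\{t_0,\dots,t_0+|S|-1\}$, and by the first step each window contains at least one element of $T$. Double counting the pairs (window, element of $T$ in that window) gives $q\le |T|\cdot|S|$, which is the claimed inequality.

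The only delicate point is the Vandermonde nonsingularity over $\F_p$, namely that the nodes $\gamma^c$ for $c\in S\subseteq\Z_q$ are pairwise distinct. This holds because $\gamma$ has exact order $q$. The case $|S|=q$ is trivial, since then $|T|\ge1$.
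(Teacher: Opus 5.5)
Your proof is correct. The paper never proves \Cref{thm:multiplicative}: it states it ``for completeness'' and refers to \cite{borello2022uncertainty}. So your argument supplies a self-contained proof rather than reproducing one from the text.

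Your route is the classical consecutive-zeros argument, a BCH-type bound. A minor of $M$ with $|S|$ \emph{consecutive} rows becomes, after scaling each column by $\gamma^{t_0c}\neq0$, a Vandermonde matrix in the distinct nodes $\gamma^c$. It is therefore nonsingular over $\F_p$ for every $p\equiv1\pmod q$, so $v$ has no run of $|\Supp(\mathcal Fv)|$ consecutive zeros. Averaging over the $q$ cyclic shifts then gives $q\le|T|\,|S|$.

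Drop the ``partition'' alternative. Since $q$ is prime, $\Z_q$ cannot be split into windows of length $|S|$ unless $|S|\in\{1,q\}$, so only the averaging version, which you do carry out, works.

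What your route buys:
\begin{itemize}
\item A strictly stronger structural statement than the product bound.
\item A clean explanation of the contrast with \Cref{thm:additive}. Minors with consecutive rows are always invertible, which is why the multiplicative bound holds for \emph{every} admissible $p$. Additive uncertainty needs \emph{all} minors to be invertible (\Cref{lem:additive-full-spark}), and some minors can be singular over $\F_p$ \cite{zhang2019chebotarev}.
\end{itemize}

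What it costs is generality, since it uses the cyclic order on $\Z_q$. A standard alternative that works for any finite abelian group is the translate-span argument:
\begin{itemize}
\item The cyclic translates of $v$ have transforms $(w_c\gamma^{sc})_{c\in S}$, so they span a space of dimension exactly $|S|$.
\item A basis of $|S|$ translates has supports covering at most $|S|\,|T|$ points.
\item Every translate lies in that span, and the translates together cover $\Z_q$, which gives the same bound.
\end{itemize}
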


\section{Applications to Exponential Polynomial Identity Testing}
\label{sec:pit}

We now apply the preceding uncertainty principles to the finite-field identity-testing model of Li and Wu \cite{li2026identity}, which we recall in the following.  Let
\begin{equation}\label{eq:P-def}
   P(x)=\sum_{j=1}^k f_j(x)
   \exp\!\left(\frac{g_j(x)}{h_j(x)}\right)
\end{equation}
with $f_j,g_j,h_j\in\Z[x_1,\ldots,x_n]$.  We say that $P$ is \emph{condensed} if each $f_j$ and $h_j$ is nonzero and
\begin{equation}\label{eq:condensed}
   g_jh_t-g_th_j\not\equiv0
   \qquad\text{for all }j\ne t.
\end{equation}
Thus the rational functions $g_j/h_j$ are pairwise distinct.
We say that $P$ has degree at most $d$ if each $f_j,g_j,h_j$ has total degree at most $d$.  We say that $P$ has height at most $w$ if all integer coefficients appearing in
\begin{equation}\label{eq:height-def}
   f_j,
   \qquad
   h_j,
   \qquad
   g_jh_t-g_th_j\quad(j\ne t)
\end{equation}
have absolute value at most $w$.  We shall assume $w\ge2$.

Let $p,q$ be primes with $p\equiv1\pmod q$, and let $G_{p,q}\le\F_p^\ast$ be the order-$q$ subgroup.  For $\gamma\in G_{p,q}$, $u\in\F_p^n$, and $v\in\F_q^n$, define the algebraic evaluation
\begin{equation}\label{eq:algebraic-evaluation}
   P_{p,q,\gamma}(u,v)
   :=
   \sum_{j=1}^k f_j(u)\,
   \gamma^{\,g_j(v)h_j(v)^{-1}}
   \in\F_p,
\end{equation}
where the exponent is computed in $\F_q\cong\Z_q$.  If some $h_j(v)$ is zero in $\F_q$, the value is declared to be $\bot$.

\subsection{A Strong Descartes Rule over Finite Fields}\label{sec:descartes}

Li and Wu \cite{li2026identity} formulate the following finite-field analogue of Descartes' rule as a sufficient route to improved soundness.

\begin{conjecture}[Strong finite-field Descartes rule]\label{conj:descartes}
There exist constants $\eps,\delta<1$ such that the following holds.  Let $p,q$ be sufficiently large primes with $p\equiv1\pmod q$, let $\gamma\in\F_p^\ast$ have order $q$, and let $G\le\F_p^\ast$ be the subgroup of order $q$.  If $k\le q^\delta$, if $\alpha_1,\ldots,\alpha_k\in\Z_q$ are distinct, and if $\beta_1,\ldots,\beta_k\in\F_p$, then
\[
   A(z):=\sum_{j=1}^k \beta_j z^{\alpha_j}
\]
has at most $\eps q$ roots in $G$, unless all $\beta_j$ are zero.
\end{conjecture}
Our additive uncertainty theorem gives a stronger conclusion for fixed $q$ and all but finitely many admissible $p$.

\begin{theorem}[Nonuniform Strong Descartes Rule]\label{thm:strong-descartes}
Fix a prime $q$.  For all but finitely many primes $p\equiv1\pmod q$, the following holds.  Let $\gamma\in\F_p^\ast$ have order $q$, and let $G\le\F_p^\ast$ be the subgroup of order $q$.  If
\[
   A(z)=\sum_{j=1}^k \beta_j z^{\alpha_j},
\]
where $\alpha_1,\ldots,\alpha_k\in\Z_q$ are distinct and $\beta_1,\ldots,\beta_k\in\F_p^\ast$, then $A$ has at most $k-1$ roots in $G$.
\end{theorem}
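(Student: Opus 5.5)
We derive this from \Cref{thm:additive}, via the identification between sparse polynomials and sparse coefficient vectors given by the NTT. Fix $q$, and let $p\equiv 1\pmod q$ lie outside the finite exceptional set $B_q$ of \Cref{lem:finite-exceptional-set}, so that the conclusion of \Cref{thm:additive} holds for every $\gamma$ of order $q$.

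Given $A(z)=\sum_{j=1}^k\beta_j z^{\alpha_j}$ with distinct $\alpha_j\in\Z_q$ and all $\beta_j\ne 0$, regard $A$ as an element of $\F_p[x]/(x^q-1)$ with exponents represented in $\{0,\dots,q-1\}$. Its coefficient vector $w$ has $\Supp(w)=\{\alpha_1,\dots,\alpha_k\}$, so $|\Supp(w)|=k$ exactly. The distinctness of the $\alpha_j$ and the nonvanishing of the $\beta_j$ are used precisely here: they guarantee that no cancellation occurs and that $w\neq 0$.

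Set $v:=E_{p,q,\gamma}(w)=(A(\gamma^t))_{t\in\Z_q}$. Then $\mathcal Fv=w$, and $v\ne 0$ because $E$ is an isomorphism by \Cref{prop:evaluation-isomorphism}. Since $\gamma$ has order $q$, the map $t\mapsto\gamma^t$ is a bijection $\Z_q\to G$. Hence the number of roots of $A$ in $G$ equals $q-|\Supp(v)|$. Note that $A(\gamma^t)$ is well defined even though exponents are taken mod $q$, because $\gamma^q=1$.

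Applying \Cref{thm:additive} to $v$ gives $|\Supp(v)|+k\ge q+1$. Therefore the number of roots is $q-|\Supp(v)|\le k-1$, as claimed.

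There is essentially no obstacle here. The only care needed is in the bookkeeping: the exceptional set of primes is uniform over all $\gamma$ and all $k$, because \Cref{lem:finite-exceptional-set} already quantifies over every order-$q$ element and every minor. One should also check that the case $k=0$ (the zero polynomial) is excluded by hypothesis, so the bound $k-1$ is meaningful.
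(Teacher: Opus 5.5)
Your proof is correct and is essentially the paper's argument. The paper phrases the final step through hyperinvertibility: a vanishing set of size $k$ would make the minor $M_{R,S}$ singular against $c_S\neq 0$. You apply the support inequality of \Cref{thm:additive} directly, and the two forms are equivalent by \Cref{lem:additive-full-spark}.
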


\begin{proof}
Choose $p$ so that \Cref{thm:additive} holds.  Equivalently, by \Cref{lem:additive-full-spark}, the matrix $M=(\gamma^{ta})_{t,a\in\Z_q}$ is hyperinvertible.
Let $S:=\{\alpha_1,\ldots,\alpha_k\}$ and define $c\in\F_p^{\Z_q}$ by $c_{\alpha_j}=\beta_j$ and $c_a=0$ for $a\notin S$.  Then
\[
   (Mc)_t=A(\gamma^t).
\]
If $A$ vanished at $k$ or more elements of $G$, there would be a $k$-element set $R\subseteq\Z_q$ such that $(Mc)_t=0$ for every $t\in R$.  Hence
\[
   M_{R,S}c_S=0.
\]
The submatrix $M_{R,S}$ is nonsingular, while $c_S\ne0$, a contradiction.
\end{proof}

\subsection{A Modified Polynomial Identity Test}\label{sec:one-query-test}

Assume $q$ satisfies \Cref{eq:many-primes}. We describe a modification to the identity tester described in \cite{li2026identity}, where we additionally randomize the choice of the prime $p$. The following test makes a single algebraic evaluation (see \Cref{eq:algebraic-evaluation}) of a fixed polynomial $P$. Sample:
\begin{itemize}
   \item $p\sim\Unif(\calP(q))$.
   \item $v\sim\Unif(\F_q^n)$.
   \item $u\sim\Unif(\F_p^n)$.
   \item An element $\gamma\in\F_p^\ast$ of order $q$.
   \item $r\sim\Unif(\Z_q)$ and set $a:=\gamma^r$.
\end{itemize}
Accept iff $P_{p,q,a}(u,v)\in\{0,\bot\}$.

\begin{theorem}[Soundness]\label{thm:one-query-pit}
Let $q$ be a prime satisfying \Cref{eq:many-primes}.  Let $P$ be a condensed $k$-sparse exponential polynomial of degree at most $d$ and height at most $w$, with $w\ge2$, and suppose $q>2kw$.
If $P\equiv0$, the one-query randomized-$p$ test accepts with probability $1$.  If $P\not\equiv0$, the test accepts with probability at most
\begin{equation}\label{eq:one-query-bound}
   \frac{kd}{q}
   +
   \frac{2d\binom{k}{2}}{q}
   +
   \frac{d}{q+1}
   +
   \frac1q
   +
   \frac{32(q-1)\log q}{q^3}\log_2(kw).
\end{equation}
\end{theorem}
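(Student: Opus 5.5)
Completeness is immediate. If $P\equiv0$ as a function on its domain, then $P_{p,q,a}(u,v)$ is either $\bot$ or equal to $0$. I would justify this the way \cite{li2026identity} does: after collecting terms whose exponents coincide, a condensed $P\equiv 0$ has every $f_j\equiv0$. Alternatively, one simply takes completeness as given by the model. The substance is soundness, which I would obtain by a union bound over four bad events, each matching one term of \Cref{eq:one-query-bound}.

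\textbf{Exponent events.} Let $E_1$ be the event that some $h_j(v)=0$ in $\F_q$. Each $h_j$ is a nonzero integer polynomial with coefficients bounded by $w<q$, so its reduction mod $q$ is nonzero. It has degree at most $d$, so Schwartz--Zippel (\Cref{lem:schwartz-zippel}) over $S=\F_q$ and a union bound over $j$ give $\Pr(E_1)\le kd/q$. Let $E_2$ be the event that $\alpha_j=\alpha_t$ for some $j\ne t$, where $\alpha_j=g_j(v)h_j(v)^{-1}$. This is the event $(g_jh_t-g_th_j)(v)=0$. By condensedness and the height bound, this polynomial is again nonzero mod $q$ and has degree at most $2d$, giving $\Pr(E_2)\le 2d\binom k2/q$. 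On the complement, on which we condition from now on, the test evaluates
\[
H_{a,v}(X)=\sum_j f_j(X)a^{\alpha_j}
\]
at a uniform $u\in\F_p^n$, with the $\alpha_j$ distinct.

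\textbf{Coefficient events.} If $H_{a,v}\not\equiv0$ over $\F_p$, Schwartz--Zippel gives rejection failure probability at most $d/p\le d/(q+1)$, since $p\ge q+1$. Let $E_3$ be the event $H_{a,v}\equiv0$. Then $E_3$ is the main term.

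\textbf{Bounding $E_3$ (main obstacle).} Pick any monomial $X^\mu$ appearing in some $f_j$, and set $\eta_j=[X^\mu]f_j\in\Z$. Then $\eta\neq0$, $\|\eta\|_1\le kw$, and $H_{a,v}\equiv0$ forces $\sum_j\eta_j\gamma^{r\alpha_j}=0$ in $\F_p$. A subtlety here is that $\eta$ might vanish mod $p$. This cannot happen, because $|\eta_j|\le w<p$. Condition on $v$, which fixes the distinct $\alpha_j$. Define $c\in\Z^{\Z_q}$ by $c_{\alpha_j}=\eta_j$. Then $|\Supp(c)|\le k<q$ and $\|c\|_1\le kw$, using $q>2kw$. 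Write $W=kw\ge2$. The event is exactly that coordinate $r$ of $E_{p,q,\gamma}(c)=\mathcal F^{-1}(c)$ vanishes, with $r$ uniform and $p$ uniform in $\calP(q)$, independent of $v$. The proof of \Cref{thm:one-query-uncertainty} bounds the expected zero fraction by
\[
\frac1q+\frac{(q-1)\log_2W}{|\calP(q)|}.
\]
Combining this with \Cref{eq:many-primes} yields
\[
\frac1q+\frac{32(q-1)\log q}{q^3}\log_2(kw).
\]
The one point to check is that the bad-prime argument is uniform over the choice of $\gamma$. It is, since \Cref{lem:bad-prime-principle} covers every order-$q$ element. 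The other point to check is that $p$ is independent of $v$, which holds by the test's design.

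Summing the bounds for $E_1$, $E_2$, the Schwartz--Zippel failure, and $E_3$ gives \Cref{eq:one-query-bound}.
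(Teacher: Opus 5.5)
Your proposal is correct and follows the paper's proof essentially step for step. Both arguments use the same Schwartz--Zippel union bound over the denominator and separation events, and Schwartz--Zippel in $u$ when $H_{a,v}\not\equiv0$. Both then reduce $H_{a,v}\equiv0$ to the vanishing of coordinate $r$ of $E_{p,q,\gamma}(c)$, where $c$ is built from the coefficient vector of a fixed monomial, and bound this with the bad-prime argument of \Cref{thm:one-query-uncertainty} at $W=kw$. The paper uses implicitly the two facts you state explicitly: the bound is uniform in $\gamma$, and $p$ is sampled independently of $v$, so the fixed-$c$ bound still applies after conditioning on $v$.
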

\begin{proof}
    Completeness is immediate: If $P\equiv0$, every defined algebraic evaluation is zero, and undefined evaluations are accepted by convention.

Assume $P\not\equiv0$.  Define the good denominator event
\[
   E_h:=\{h_j(v)\ne0\text{ in }\F_q\text{ for all }j\in[k]\},
\]
and the good separation event
\[
   E_g:=\{g_j(v)h_t(v)-g_t(v)h_j(v)\ne0\text{ in }\F_q\text{ for all }j\ne t\}.
\]
Because $q>2kw\ge w$, every nonzero integer polynomial in \Cref{eq:height-def} remains nonzero after reduction modulo $q$.  By \Cref{lem:schwartz-zippel} (Schwartz--Zippel) and the union bound,
\begin{equation}\label{eq:bad-v-kquery}
   \Pr\bigl(\neg(E_h\cap E_g)\bigr)
   \le
   \frac{kd}{q}+\frac{2d\binom{k}{2}}{q}
\end{equation}
since $\deg(g_jh_t-g_th_j)\le2d$.

Condition on $E_h\cap E_g$ and set
\[
   \alpha_j:=g_j(v)h_j(v)^{-1}\in\Z_q.
\]
The $\alpha_j$ are pairwise distinct.  For fixed $p,\gamma,r$, with $a=\gamma^r$, the algebraic evaluation is the value at $u$ of
\[
   H_{a,v}(X):=\sum_{j=1}^k f_j(X)a^{\alpha_j}\in\F_p[X_1,\ldots,X_n].
\]
If $H_{a,v}\not\equiv0$, again \Cref{lem:schwartz-zippel} (Schwartz--Zippel) gives
\begin{equation}\label{eq:SZ-u-onequery}
   \Pr_{u\sim\Unif(\F_p^n)}\bigl(H_{a,v}(u)=0\bigr)
   \le
   \frac{d}{p}
   \le
   \frac{d}{q+1}.
\end{equation}
It remains to bound the probability (over the random choice of $p,\gamma,r$) that $H_{\gamma^r,v}\equiv0$.
Since $P\not\equiv0$ and is condensed, not all coefficient polynomials $f_j$ are zero.  Choose a monomial $X^\mu$ for which
\[
   \eta:=(\eta_1,\ldots,\eta_k)
   :=([X^\mu]f_1,\ldots,[X^\mu]f_k)
   \in\Z^k
\]
is nonzero.  The height bound gives
\[
   \sum_{j=1}^k |\eta_j|\le kw.
\]
Since every sampled $p\in\calP(q)$ satisfies $p\ge q+1>2kw$, the vector $\eta$ remains nonzero modulo $p$.
If $H_{\gamma^r,v}\equiv0$, then the coefficient of $X^\mu$ in this polynomial vanishes in $\F_p$:
\begin{equation}\label{eq:coeff-vanishes}
   \sum_{j=1}^k \eta_j\gamma^{r\alpha_j}=0.
\end{equation}
Define $c\in\Z^{\Z_q}$ by placing $\eta_j$ at the coordinate $\alpha_j$.
Because the $\alpha_j$ are distinct,
\[
   c\ne0,
   \qquad
   |\Supp(c)|\le k<q,
   \qquad
   \|c\|_1\le kw.
\]
Moreover,
\[
   E_{p,q,\gamma}(c)(r)=\sum_{j=1}^k \eta_j\gamma^{r\alpha_j}.
\]
Therefore \Cref{eq:coeff-vanishes} is the event $E_{p,q,\gamma}(c)(r)=0$. \Cref{thm:one-query-uncertainty}, applied with $W=kw$, gives
\begin{equation}\label{eq:H-identically-zero}
   \Pr_{p,r}\bigl(H_{\gamma^r,v}\equiv0\bigr)\le
   \Pr_{p,r}\bigl(E_{p,q,\gamma}(c)(r)=0\bigr)
   \le
   \frac1q+\frac{32(q-1)\log q}{q^3}\log_2(kw).
\end{equation}
Combining \Cref{eq:SZ-u-onequery},  \Cref{eq:H-identically-zero}, and \Cref{eq:bad-v-kquery} proves \Cref{eq:one-query-bound}.
\end{proof}

\subsection*{Acknowledgments}
We thank ChatGPT for helping with ideas, finding references, sanity-checking the math, and general discussions at any stage of this project. The proofs are all written by a human and the authors verified the correctness and originality of all content, including references. We also thank Jiatu Li and Mengdi Wu for valuable correspondence.

G.M.\ is supported by the European Research Council through an ERC Starting Grant (Grant agreement No.~101077455, ObfusQation) and by the Deutsche Forschungsgemeinschaft (DFG, German Research Foundation) under Germany's Excellence Strategy -- EXC 2092 CASA-390781972.

A.R.\ is supported by European Research Council (ERC) under the EU’s Horizon 2020 research and innovation programme (Grant agreement No.\ 101019547) and Cariplo CRYPTONOMEX grant.

\bibliographystyle{alpha}
\bibliography{reference.bib}

\appendix

\section{Approximate Hyperinvertibility}\label{sec:approx-full-spark}
In the following, we prove an approximate version of the hyperinvertibility theorem of Fourier matrices. 
For $q$ prime and $1\le k\le q$, let
\[
   \mathcal{K}_{q,k}:=\{(R,C):R,C\subseteq\Z_q,\ |R|=|C|=k\}.
\]
For a prime $p\equiv1\pmod q$, choose any element $\gamma\in\F_p^\ast$ of order $q$ and define
\begin{equation}\label{eq:Fqkp-def}
   F_{q,k}(p):=
   \Pr_{(R,C)\sim\Unif(\mathcal{K}_{q,k})}
   \bigl(\det(\gamma^{rc})_{r\in R,c\in C}=0\text{ in }\F_p\bigr).
\end{equation}
This value is independent of the chosen order-$q$ element $\gamma$: Replacing $\gamma$ by $\gamma^s$ with $s\in\Z_q^\ast$ replaces $R$ by $sR$, and $R$ is uniform over all $k$-subsets.

\begin{lemma}[Bad Primes for One Submatrix]\label{lem:fixed-minor-bad-primes}
Fix a prime $q$, an integer $1\le k\le q$, and $(R,C)\in\mathcal{K}_{q,k}$.  The number of primes $p\equiv1\pmod q$ for which there exists an element $\gamma\in\F_p^\ast$ of order $q$ such that
\[
   \det(\gamma^{rc})_{r\in R,c\in C}=0\quad\text{in }\F_p
\]
is at most $\frac{(q-1)k}{2}\log_2 k$.
\end{lemma}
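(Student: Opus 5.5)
We apply \Cref{lem:bad-prime-principle} to the polynomial $F:=D_{R,C}(z)=\det(z^{rc})_{r\in R,c\in C}\in\Z[z]$. As in the proof of \Cref{lem:finite-exceptional-set}, each exponent $rc$ is reduced to an integer representative in $\{0,\ldots,q-1\}$. There are two hypotheses to check: nonvanishing at the primitive roots, and a uniform bound $B$ on $|F(\omega^b)|$.

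\textbf{Nonvanishing.} This step is already done in the proof of \Cref{lem:finite-exceptional-set}. For $b\in\Z_q^\ast$, we have $D_{R,C}(\omega^b)=\det(\omega^{rc'})_{r\in R,c'\in bC}$, because $\omega^q=1$ makes the reduction of exponents harmless. This determinant is nonzero by \Cref{thm:tao-chebotarev}.

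\textbf{Upper bound.} The matrix $(\omega^{brc})_{r\in R,c\in C}$ is a $k\times k$ matrix whose entries all have modulus $1$, so each column has Euclidean norm $\sqrt k$. Hadamard's inequality then gives
\[
   |D_{R,C}(\omega^b)|\le \prod_{c\in C}\sqrt{k}=k^{k/2}.
\]
We therefore take $B:=k^{k/2}$, which satisfies $B\ge1$ as the lemma requires.

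\textbf{Counting.} Suppose $p\equiv1\pmod q$ and $\gamma$ has order $q$ with $\det(\gamma^{rc})=0$ in $\F_p$. Since $\gamma^q=1$, this determinant equals the reduction of $D_{R,C}(\gamma)$ modulo $p$, so $F(\gamma)=0$ in $\F_p$. \Cref{lem:bad-prime-principle} now bounds the number of such primes by
\[
   (q-1)\log_2 B=(q-1)\tfrac{k}{2}\log_2 k.
\]
This is exactly the claimed bound.

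For $k=1$ the bound is $0$, which is consistent: a $1\times1$ minor is a power of $\gamma$ and never vanishes, and here $B=1$.

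The only step that is not bookkeeping is the size estimate. The naive bound $|D_{R,C}(\omega^b)|\le k!$, obtained by expanding the determinant as a sum of $k!$ unimodular terms, would give $\log_2 k!$ in place of $\tfrac{k}{2}\log_2 k$. To get the stated constant we must use Hadamard's inequality, taking care to apply it to the complex matrix evaluated at $\omega^b$, not to the integer polynomial.
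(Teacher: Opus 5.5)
Your proof is correct and follows the paper's argument essentially verbatim. Both use nonvanishing of $D_{R,C}(\omega^b)$ via \Cref{thm:tao-chebotarev} after replacing $C$ by $bC$, then Hadamard's inequality to get $|D_{R,C}(\omega^b)|\le k^{k/2}$, and then \Cref{lem:bad-prime-principle} with $B=k^{k/2}$. Your extra checks (that $B\ge1$, that $\det(\gamma^{rc})$ is the reduction of $D_{R,C}(\gamma)$ because $\gamma^q=1$, and the $k=1$ case) are sound bookkeeping the paper leaves implicit.
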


\begin{proof}
Define
\[
   D_{R,C}(z):=\det(z^{rc})_{r\in R,c\in C}\in\Z[z],
\]
again using representatives of $rc\in\Z_q$ in $\{0,\ldots,q-1\}$. For $b\in\Z_q^\ast$,
\[
   D_{R,C}(\omega^b)=\det(\omega^{brc})_{r\in R,c\in C}\ne0
\]
by \Cref{thm:tao-chebotarev}.  Also $D_{R,C}(\omega^b)$ is the determinant of a $k\times k$ complex matrix whose entries all have absolute value $1$.  Hadamard's inequality gives
\[
   |D_{R,C}(\omega^b)|\le k^{k/2}.
\]
Apply \Cref{lem:bad-prime-principle} with $B=k^{k/2}$.
\end{proof}
Next, we obtain an average bound over all submatrices.

\begin{lemma}[Averaging over Submatrices]\label{lem:average-minors}
Let $q$ be prime, let $1\le k\le q$, and let $x\ge2$.  For at least half of the primes $p\le x$ with $p\equiv1\pmod q$,
\[
   F_{q,k}(p)
   \le
   \frac{(q-1)k}{\pi(x;q,1)}\log_2 k,
\]
where $\pi(x;q,1):=|\{p\le x:p\text{ prime and }p\equiv1\pmod q\}|$.
\end{lemma}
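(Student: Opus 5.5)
This is a double-counting argument followed by Markov's inequality. Let $\mathcal P_x:=\{p\le x:\ p \text{ prime},\ p\equiv 1\pmod q\}$, so that $|\mathcal P_x|=\pi(x;q,1)$. We may assume $\pi(x;q,1)\ge1$; otherwise the statement is vacuous. For each $(R,C)\in\mathcal K_{q,k}$ let $\mathrm{Bad}(R,C)$ be the set of primes $p\equiv1\pmod q$ for which some element of order $q$ makes the minor $\det(\gamma^{rc})_{r\in R,c\in C}$ vanish in $\F_p$. By \Cref{lem:fixed-minor-bad-primes}, $|\mathrm{Bad}(R,C)|\le \frac{(q-1)k}{2}\log_2 k$.

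The quantity $F_{q,k}(p)$ does not depend on the choice of $\gamma$, as noted after \Cref{eq:Fqkp-def}. So we can fix one $\gamma_p$ of order $q$ for each $p$. If the minor indexed by $(R,C)$ vanishes for $\gamma_p$, then $p\in\mathrm{Bad}(R,C)$. Hence
\[
   F_{q,k}(p)\le \Pr_{(R,C)\sim\Unif(\mathcal K_{q,k})}\bigl(p\in \mathrm{Bad}(R,C)\bigr).
\]

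Next we average over $p\sim\Unif(\mathcal P_x)$ and swap the order of summation:
\[
   \E_{p}\bigl(F_{q,k}(p)\bigr)\le \E_{(R,C)}\frac{|\mathrm{Bad}(R,C)\cap\mathcal P_x|}{\pi(x;q,1)}\le \frac{(q-1)k\log_2 k}{2\,\pi(x;q,1)}.
\]
Since $F_{q,k}(p)\ge0$, Markov's inequality shows that the fraction of $p\in\mathcal P_x$ with $F_{q,k}(p)>2\E_p F_{q,k}(p)$ is less than $1/2$. Every other prime, which is at least half of $\mathcal P_x$, satisfies $F_{q,k}(p)\le \frac{(q-1)k}{\pi(x;q,1)}\log_2 k$.

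When $k=1$ the bound is $0$. This is consistent, because $1\times1$ minors are powers of $\gamma$ and never vanish, and \Cref{lem:fixed-minor-bad-primes} gives zero bad primes in this case.

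The only point that needs care is that the events "the minor vanishes" must be tied to a single consistent $\gamma_p$ per prime. This is handled by the $\gamma$-independence of $F_{q,k}(p)$ together with the fact that $\mathrm{Bad}(R,C)$ quantifies over all order-$q$ elements. Beyond that there is no real obstacle; the argument is Fubini followed by Markov.
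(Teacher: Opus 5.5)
Your proof is correct and is essentially the paper's argument. Both fix one order-$q$ element $\gamma_p$ per prime, exchange the sum over primes with the average over $(R,C)$ so that \Cref{lem:fixed-minor-bad-primes} bounds $\sum_p F_{q,k}(p)$ by $\frac{(q-1)k}{2}\log_2 k$, and finish with Markov's inequality.
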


\begin{proof}
For each admissible prime $p$, fix one order-$q$ element $\gamma_p\in\F_p^\ast$.  By linearity of expectation,
\begin{align*}
   \sum_{\substack{p\le x\\ p\equiv1\, (q)}} F_{q,k}(p)
   &=
   \E_{(R,C)\sim\Unif(\mathcal{K}_{q,k})}
   \sum_{\substack{p\le x\\ p\equiv1\, (q)}}
   \mathbf 1\{\det(\gamma_p^{rc})_{r\in R,c\in C}=0\}.
\end{align*}
For each fixed $(R,C)$, \Cref{lem:fixed-minor-bad-primes} bounds the inner sum by $\frac{(q-1)k}{2}\log_2 k$.  Hence
\[
   \sum_{\substack{p\le x\\ p\equiv1\, (q)}} F_{q,k}(p)
   \le
   \frac{(q-1)k}{2}\log_2 k.
\]
Markov's inequality applied to the nonnegative values $F_{q,k}(p)$ proves the claim.
\end{proof}
We are now ready to state and prove the main theorem of this section.
\begin{theorem}[Approximate Hyperinvertibility]\label{thm:approx-full-spark}
There exist infinitely many primes $q$ such that, for every $1\le k\le q$, at least half of the primes $p\in\mathcal{P}(q)$ satisfy
\[
   \Pr_{(R,C)\sim\Unif(\calK_{q,k})}
   \bigl(\det(\gamma^{rc})_{r\in R,c\in C}\ne0\text{ in }\F_p\bigr)
   \ge
   1-64\frac{k(\log q)^2}{q^2},
\]
where $\gamma\in\F_p^\ast$ is any element of order $q$.
\end{theorem}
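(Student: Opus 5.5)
The plan is to combine \Cref{lem:average-minors} with the prime-count lower bound \Cref{eq:many-primes}. By \Cref{lem:many-primes} there are infinitely many primes $q$ satisfying \Cref{eq:many-primes}, and we work with any such $q$. For such $q$, take $x:=q^4$, so that the primes $p\le x$ with $p\equiv1\pmod q$ are exactly $\calP(q)$, and $\pi(x;q,1)=|\calP(q)|\ge q^3/(32\log q)$.

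Fix $1\le k\le q$. By \Cref{lem:average-minors}, at least half of the primes $p\in\calP(q)$ satisfy
\[
   F_{q,k}(p)\le \frac{(q-1)k\log_2 k}{\pi(q^4;q,1)}\le \frac{32(q-1)k\log_2 k\,\log q}{q^3}.
\]
The remaining work is to simplify this bound. We use $q-1<q$ and $k\le q$, which gives $\log_2 k\le \log_2 q=\log q/\log 2\le 2\log q$. This yields
\[
   F_{q,k}(p)\le \frac{64\,k(\log q)^2}{q^2}.
\]
Since $F_{q,k}(p)$ is by definition \Cref{eq:Fqkp-def} the probability that a uniformly random $k\times k$ minor vanishes, the complementary probability is at least $1-64k(\log q)^2/q^2$, as claimed. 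As noted after \Cref{eq:Fqkp-def}, this quantity does not depend on the choice of the order-$q$ element $\gamma$, so the statement holds for any such $\gamma$.

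Two points need care.

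\begin{itemize}
   \item \textbf{Uniformity in $k$.} The "half of the primes" set produced by Markov's inequality may depend on $k$. The statement quantifies "for every $k$, at least half of the primes", with the set allowed to depend on $k$, so applying \Cref{lem:average-minors} separately for each $k$ suffices and no union bound over $k$ is needed.
   \item \textbf{The constant 64.} This is the only place I expect any real friction. We need $1/\log 2<2$, which holds since $\log 2\approx0.693$. The case $k=1$ is trivial, since then $\log_2 k=0$; indeed every $1\times1$ minor $\gamma^{rc}$ is nonzero.
\end{itemize}
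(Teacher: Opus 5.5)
Your proposal is correct and follows the paper's proof. Like the paper, you take $q$ satisfying the prime-count bound (infinitely many such $q$ exist by the admissible-primes lemma), apply the averaging-over-submatrices lemma with $x=q^4$, and simplify using $q-1\le q$ and $\log_2 k\le 2\log q$. Your remarks are also accurate: the good set of primes may depend on $k$, and the case $k=1$ is trivial.
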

\begin{proof}
Let $q$ be any prime satisfying \Cref{eq:many-primes}.  By \Cref{lem:average-minors} with $x=q^4$, at least half of the primes $p\in\mathcal P(q)$ satisfy
\[
   F_{q,k}(p)
   \le
   \frac{(q-1)k}{\pi(q^4;q,1)}\log_2 k \leq32\frac{k(q-1)}{q^3}(\log q)(\log_2 k)
\]
using \Cref{eq:many-primes}. Since $k\le q$ and $\log_2 k\le 2\log q$ for all $k\le q$, this is at most $64k(\log q)^2/q^2$. 
\end{proof}
\end{document}